\newtheorem{theorem}{Theorem}[section]
\newtheorem{lemma}[theorem]{Lemma}
\newtheorem{corollary}[theorem]{Corollary}
\newcommand{\cO}{{\mathcal O}}
\newcommand{\cT}{{\mathcal T}}
\newcommand{\sT}{{\mathscr T}}
\newcommand{\nni}{\textsc{nni}}
\newcommand{\Nni}{\textsc{Nni}}
\newcommand{\spr}{\textsc{spr}}
\newcommand{\Spr}{\textsc{Spr}}
\newcommand{\tbr}{\textsc{tbr}}
\newcommand{\nhd}{neighbourhood}
\title[On the neighbourhoods of trees]{On the neighbourhoods of trees}
\author{Peter J. Humphries}
\address{Department of Mathematics and Statistics, University of Canterbury, Christchurch, New Zealand}
\email{pjhumphries@gmail.com}
\author{Taoyang Wu}
\address{School of Computing Sciences, University of East Anglia, Norwich, United Kingdom}
\email{taoyang.wu@gmail.com}
\thanks{The authors thank the Isaac Newton Institute for Mathematical Sciences, Cambridge, where this work began. Peter Humphries was supported by the New Zealand Marsden Fund. Taoyang Wu thanks Queen Mary, University of London, where he undertook some of this research. He was also supported in part by the Engineering and Physical Sciences Research Council (EP/D068800/1). A preliminary version of this paper is included in the authors' respective Ph.D. dissertations. Finally, we thank Dr. Arnau Mir Torres and two anonymous referees for their valuable comments.}
\subjclass{}
\keywords{tree rearrangement, \tbr, phylogenetic tree}
\date{\today}
\begin{document}

\begin{abstract}
Tree rearrangement operations typically induce a metric on the space of phylogenetic trees. One important property of these metrics is the size of the {\nhd}, that is, the number of trees exactly one operation from a given tree. We present an expression for the size of the {\tbr} (tree bisection and reconnection) neighbourhood, thus answering a question first posed in \cite{all01}.
\end{abstract}

\maketitle

\section{Introduction}
\label{intro}

Phylogenetic trees are a commonly used tool for representing the relationships between species in an evolutionary system, especially in evolutionary biology. A central task in the study of these trees is to determine which among a set of hypothesised trees gives the best explanation of empirical data. However, finding the trees that optimize some criterion is often computationally prohibitive because of the large number of trees to be checked. An approach that avoids this is a heuristic hill-climbing algorithm that searches tree space using tree rearrangement operations \cite{kat99,kub08}. That is, at each iteration the optimal tree within one rearrangement operation is chosen as the input for the next step, and the algorithm is thus guaranteed to find a local optimum.

 Loosely speaking, a tree rearrangement operation breaks a tree into two contiguous parts, and rejoins these parts to form a new tree. Among the three tree rearrangement operations of interest, namely {\nni} (nearest neighbour interchange), {\spr} (subtree prune and regraft) and {\tbr} (tree bisection and reconnection), each induces a distinct metric on the space of unrooted trees. Several properties of these metrics are important for understanding the efficiency of the algorithm outlined above. Our interest in this paper is in the size of the {\tbr} {\nhd}, that is, the number of trees that can be reached from a specified starting tree via a single {\tbr} operation.

A phylogenetic tree is an unrooted binary tree in the graph theoretic sense, with a unique label attached to every leaf, or vertex of degree one. We denote by $\sT_n$ the collection of all phylogenetic trees whose leaves are the set $\{1,\ldots,n\}$.

For a tree $\cT\in\sT_n$, where $n\geq 4$, Robinson \cite{rob71} showed that the {\nni} {\nhd} $N_{\nni}(\cT)$ has size exactly equal to $2n-6$, that is,
\begin{equation*}
|N_{\nni}(\cT)|=2n-6,
\end{equation*}
 while Allen and Steel \cite{all01} proved that
\begin{align*}
|N_{\spr}(\cT)|&=2(n-3)(2n-7),
\end{align*}
where $N_{\spr}(\cT)$ is the {\spr} neighbourhood of $\cT$. It was also demonstrated in \cite{all01} that the size of the {\tbr} {\nhd} is dependent on the shape of $\cT$. More recently, in~\cite{hum}, the bounds
\begin{align*}
cn^2\log n+O(n^2)&\leq|N_{\tbr}(\cT)|\leq\frac{2}{3}n^3-4n^2+\frac{16}{3}n+2
\end{align*}
were shown to hold for all $n\geq 4$, with the upper bound being met with equality if and only if $\cT$ is a caterpillar, that is, {   a phylogenetic tree in which every non-leaf vertex is adjacent to a leaf}.

The rest of this paper is divided into three sections. Section~\ref{background} contains the definitions required to follow the main content of the paper. In Section~\ref{exact}, we relate the number of possible rearrangement operations for $\cT$ to the size of the neighbourhood, and use this to reprove Allen and Steel's \cite{all01} result for the {\spr} neighbourhood and to obtain an expression for the {\tbr} neighbourhood dependent on the tree shape. In Section~\ref{extreme}, we characterise the trees that respectively maximise and minimise the size of $N_{\tbr}(\cT)$ for all binary tree spaces $\sT_n$. These characterisations are also extended to reprove the tight upper bound given in \cite{hum}, and to further prove an asymptotically tight lower bound.

\section{Definitions}
\label{background}

Before giving formal definitions of each rearrangement operation, we introduce some useful terminology. Given a tree $\cT$ and a subset $X$ of the leaf set of $\cT$, the restriction of $\cT$ to $X$, or $\cT|X$, is the minimal subtree of $\cT$ connecting the leaves in $X$, with all vertices of degree two supressed. A split $X|Y$ of a tree is a bipartition of the leaf set such that $\cT|X$ and $\cT|Y$ are vertex disjoint subtrees of $\cT$. Further, if $X'\subseteq X, Y'\subseteq Y$, then we call $X'|Y'$ a partial split of $\cT$. A split is trivial if one of its parts contains only one leaf. The set of all splits of $\cT$ is denoted by $\Sigma(\cT)$. If $\cT$ is a tree with the leaf set $Z$, then a cluster of $\cT$ is a set $X$ such that $X|(Z-X)\in\Sigma(\cT)$. If $|X|=2$, then we call $X$ a cherry.

{   A binary tree is a tree whose vertex degree is either one or three. Note that a binary tree with $n\geq 3$ leaves has $2n-2$ vertices in total and $2n-3$ edges, an observation that will be used throughout this paper. }

Although {\nni} was the point of departure for the study of these operations \cite{rob71}, we will first define {\tbr}, being the most general of the three. A {\tbr} operation on a binary phylogenetic tree $\cT$ involves deleting some edge $e$ from $\cT$ (bisection), and subsequently inserting a new edge $f$ so that the resulting tree $\cT'$ is distinct from $\cT$ (reconnection). Since we require $\cT'$ to be binary, it is necessary to subdivide an edge in one (in the case that the other component is an isolated labelled vertex) or both components created in the bisection stage before inserting the new edge. An example is given in Fig.~\ref{figtbrex}. We can transform $\cT_1$ into $\cT_2$ by first deleting the edge $e$ from $\cT_1$, and then adding the new edge $f$. To check that there has been no other change to the tree's structure, note that deleting $e$ from $\cT_1$ gives the same forest as deleting $f$ from $\cT_2$.
\begin{figure}[ht]
\begin{center}
\setlength{\unitlength}{5mm}
\begin{picture}(19,5)(0,0)
 \multiput(0,0)(10,0){2}{
 \thicklines
 \put(3,2){\line(1,1){1}}
 \put(3,4){\line(1,-1){1}}
 \put(4,3){\line(1,0){3}}
 \put(5,3){\line(0,-1){1}}
 \put(6,3){\line(0,-1){1}}
 \put(7,3){\line(1,1){1}}
 \put(7,3){\line(1,-1){1}}}
 \put(1,2.6){$\cT_1$}
 \put(2.4,4.25){$1$}
 \put(2.4,1.25){$2$}
 \put(4.8,1.25){$3$}
 \put(5.3,3.25){$e$}
 \put(5.8,1.25){$4$}
 \put(8.1,1.25){$5$}
 \put(8.1,4.25){$6$}
 \put(11,2.6){$\cT_2$}
 \put(12.4,4.25){$1$}
 \put(12.4,1.25){$3$}
 \put(14.8,1.25){$2$}
 \put(15.3,3.25){$f$}
 \put(15.8,1.25){$5$}
 \put(18.1,1.25){$4$}
 \put(18.1,4.25){$6$}
\end{picture}
\end{center}
\caption{Two trees $\cT_1,\cT_2\in\sT_6$ that are one {\tbr} operation apart.}
\label{figtbrex}
\end{figure}

For a binary tree $\cT$, we define the set $\cO_{\tbr}(\cT)$ to be all possible {\tbr} operations $\theta$ that can be applied to the tree $\cT$. An important point to note here is that for distinct $\theta_1,\theta_2\in\cO_{\tbr}(\cT)$, we may have $\theta_1(\cT)=\theta_2(\cT)$. The reason for this is that an operation $\theta\in\cO_{\tbr}(\cT)$ is not specified solely by the output tree $\theta(\cT)$, but also by the edge $e$ that is deleted from $\cT$ in the bisection stage of $\theta$.

Observe that for any two distinct trees $\cT,\cT'\in\sT_X$, there is a {\tbr} operation $\theta\in\cO_{\tbr}(\cT)$ for which $\theta(\cT)=\cT'$ if and only if there is some split $X_1|X_2\in\Sigma(\cT)\cap\Sigma(\cT')$ such that $\cT|X_i=\cT'|X_i$ for all $i\in\{1,2\}$. In this case, $X_1|X_2$ is the split induced by $\theta$. To demonstrate this, if the edges $e$ and $f$ have respectively been deleted and inserted in the {\tbr} operation that changes $\cT$ into $\cT'$, then the forest obtained by deleting $e$ from $\cT$ must be identical to the forest obtained by deleting $f$ from $\cT'$. This provides not only the common bipartition of the leaf set, but also the common subtrees induced by each part of this bipartition.

{\Spr} is a special case of {\tbr} in which there is less freedom at the reconnection stage. Let $\cT$ be a binary tree, and let $\theta\in\cO_{\tbr}(\cT)$ be a {\tbr} operation on $\cT$ in which the edge $e$ is deleted, and let $X_1|X_2$ be the split of $\cT$ induced by $e$. Then $\theta$ is an {\spr} operation for $\cT$ if and only if, without loss of generality, $\cT|(X_2\cup x_1)=\theta(\cT)|(X_2\cup x_1)$ for some $x_1\in X_1$. Moreover, if this holds then in fact the same property holds for all $x_1\in X_1$.

The significance of this condition is that one of the components formed in the bisection of $\cT$, in this case $\cT|X_2$, is treated as a rooted subtree, and is then regrafted so that this rooting is preserved with respect to the other component. We say that we have pruned $\cT|X_2$ from $\cT$, and regrafted it to form $\cT'$.

The previous example (refer to Fig.~\ref{figtbrex}) does not represent an {\spr} operation, since neither component obtained by deleting $e$ from $\cT_1$ can be regrafted to the other to form $\cT_2$. By making a subtle change, in particular by exchanging the labels $4$ and $5$ on $\cT_2$, we get a tree $\cT_3$ that can be obtained from $\cT_1$ by a single {\spr} operation. This example is shown in Fig.~\ref{figsprex}.
\begin{figure}[ht]
\begin{center}
\setlength{\unitlength}{5mm}
\begin{picture}(19,5)(0,0)
 \multiput(0,0)(10,0){2}{
 \thicklines
 \put(3,2){\line(1,1){1}}
 \put(3,4){\line(1,-1){1}}
 \put(4,3){\line(1,0){3}}
 \put(5,3){\line(0,-1){1}}
 \put(6,3){\line(0,-1){1}}
 \put(7,3){\line(1,1){1}}
 \put(7,3){\line(1,-1){1}}}
 \put(1,2.6){$\cT_1$}
 \put(2.4,4.25){$1$}
 \put(2.4,1.25){$2$}
 \put(4.8,1.25){$3$}
 \put(5.3,3.2){$e$}
 \put(5.8,1.25){$4$}
 \put(8.1,1.25){$5$}
 \put(8.1,4.25){$6$}
 \put(11,2.6){$\cT_3$}
 \put(12.4,4.25){$1$}
 \put(12.4,1.25){$3$}
 \put(14.8,1.25){$2$}
 \put(15.3,3.2){$f$}
 \put(15.8,1.25){$4$}
 \put(18.1,1.25){$5$}
 \put(18.1,4.25){$6$}
\end{picture}
\end{center}
\caption{Two trees $\cT_1,\cT_3\in\sT_6$ that are one {\spr} operation apart.}
\label{figsprex}
\end{figure}

{\Nni} operations are {\tbr} operations in which the reconnection is still more restricted than for {\spr}. Let $\cT$ be a phylogenetic tree, and let $\theta\in\cO_{\tbr}(\cT)$ be an {\spr} operation in which $\cT|Y$ is pruned from $\cT$ and regrafted to form $\cT'=\theta(\cT)$. We say that $\theta$ is an {\nni} operation if and only if there is some cluster $Z\neq Y$ of $\cT$ such that we can form $\cT'$ from $\cT$ by swapping the subtrees $\cT|Y$ and $\cT|Z$. In this case, $\cT|Y$ and $\cT|Z$ can be seen as adjacent in some sense, as shown by the schematic diagram in Fig.~\ref{fignniex}. {   Note that $\cT'$ can be obtained from $\cT$ by four distinct} {\nni} {   operations, namely pruning one of the subtrees in $\{ \cT|X, \cT|Y, \cT|Z,\cT|W \}$ and regrafting it in an appropriate way. Indeed,  if $\theta$ is an} {\nni} {   operation for $\cT$, then there are precisely four distinct operations} $\theta'\in\cO_{\nni}(\cT)$ {   such that $\theta(\cT)=\theta'(\cT)$}.
The possibility that two distinct operations can result in the same tree lies behind the main lemma (Lemma~\ref{lemnni}) in Section~\ref{exact}.
\begin{figure}[ht]
\begin{center}
\setlength{\unitlength}{5mm}
\begin{picture}(20,5)(0,0)
 \thicklines
 \multiput(0,0)(10,0){2}{
 \put(2,2.5){\line(0,1){1}}
 \put(2,2.5){\line(2,1){1}}
 \put(2,3.5){\line(2,-1){1}}
 \put(3,3){\line(1,0){4}}
 \multiput(4,3)(2,0){2}{
 \put(0,0){\line(0,-1){1}}
 \put(0,-1){\line(-1,-2){0.5}}
 \put(0,-1){\line(1,-2){0.5}}
 \put(-0.5,-2){\line(1,0){1}}}
 \put(7,3){\line(2,1){1}}
 \put(7,3){\line(2,-1){1}}
 \put(8,2.5){\line(0,1){1}}
 \put(1,2.75){$X$}
 \put(8.2,2.75){$W$}}
 \put(4.75,3.5){$\cT$}
 \put(3.7,0.25){$Y$}
 \put(5.7,0.25){$Z$}
 \put(14.75,3.5){$\cT'$}
 \put(13.7,0.25){$Z$}
 \put(15.7,0.25){$Y$}
\end{picture}
\end{center}
\caption{Two trees that are one {\nni} operation apart.}
\label{fignniex}
\end{figure}
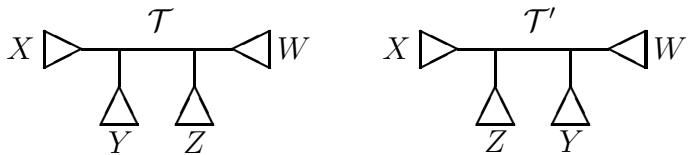


Extending our earlier notation for {\tbr} to both {\spr} and {\nni}, we have
\begin{align*}
\cO_{\nni}(\cT)&\subseteq\cO_{\spr}(\cT)\subseteq\cO_{\tbr}(\cT)
\end{align*}
for any tree $\cT$. 
 The {\tbr} {\nhd} of $\cT$ is the set
\begin{align*}
N_{\tbr}(\cT)&=\{\theta(\cT):\theta\in\cO_{\tbr}(\cT)\}.
\end{align*}
That is, $N_{\tbr}(\cT)$ is the set of all trees that are precisely one $\tbr$ rearrangement operation from $\cT$. The {\nni} neighbourhood $N_{\nni}(\cT)$ and the {\spr} neighbourhood $N_{\spr}(\cT)$ are defined similarly. Clearly, the elements in these neighbourhoods are dependent on the operation in question, and we have the corresponding nesting property as above. More explicitly,
\begin{align*}
N_{\nni}(\cT)&\subseteq N_{\spr}(\cT)\subseteq N_{\tbr}(\cT).
\end{align*}

\section{Neighbourhood Sizes}
\label{exact}

The approach used by Allen and Steel \cite{all01} to determine both the size of the {\spr} {\nhd} and the upper bound on the size of the {\tbr} {\nhd} was to count directly the number of trees that can be obtained from $\cT$ via a single operation. While this seems the most natural approach, there is a fundamental barrier to performing this enumeration that we alluded to briefly in {   Section~\ref{background}}. This is the fact that some operations in $\cO_{\tbr}(\cT)$ may be redundant. That is, there may be distinct elements $\theta_1,\theta_2\in\cO_{\tbr}(\cT)$ for which
\begin{align*}
\theta_1(\cT)&=\theta_2(\cT).
\end{align*}
This potentially leads to counting some trees in $N_{\tbr}(\cT)$ more than once. If we can determine precisely which operations in $\cO_{\tbr}(\cT)$ output the same tree, then we can relate the size of the {\tbr} {\nhd} to the number of operations on $\cT$.

It transpires, as the next lemma shows, that the only redundant {\tbr} operations are all {\nni} operations.

\begin{lemma}\label{lemnni}
Let $\theta,\theta'\in\cO_{\tbr}(\cT)$ be distinct {\tbr} operations. If $\theta(\cT)=\theta'(\cT)$, then $\theta\in \cO_{\nni}(\cT)$.
\end{lemma}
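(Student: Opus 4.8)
The plan is to exploit the fact, noted just above the statement, that a {\tbr} operation is determined by its deleted edge together with its output tree. Since $\theta\neq\theta'$ but $\theta(\cT)=\theta'(\cT)=:\cT'$, the two operations must delete distinct edges $e\neq e'$ of $\cT$. By the characterisation of when a {\tbr} operation carries $\cT$ to $\cT'$, each deleted edge yields a split in $\Sigma(\cT)\cap\Sigma(\cT')$ whose two sides induce common subtrees in $\cT$ and $\cT'$. As $e$ and $e'$ are distinct edges of the tree $\cT$, removing both splits the leaf set into three blocks, so I would write the leaf set as a disjoint union $P\cup Q\cup R$ with the split induced by $e$ equal to $P\,|\,(Q\cup R)$ and that induced by $e'$ equal to $(P\cup Q)\,|\,R$, where $Q$ is the block lying between the two edges.

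First I would record the agreement conditions these splits force. From $e$ we obtain $\cT|P=\cT'|P$ and $\cT|(Q\cup R)=\cT'|(Q\cup R)$, and from $e'$ we obtain $\cT|R=\cT'|R$ and $\cT|(P\cup Q)=\cT'|(P\cup Q)$; restricting either of the last two identities also gives $\cT|Q=\cT'|Q$. The reading of these identities is that $\theta$ holds the subtree $\cT|(Q\cup R)$ rigid and merely relocates (and a priori re-roots) the block $P$, while $\theta'$ holds $\cT|(P\cup Q)$ rigid and merely relocates $R$. Crucially, because both identities hold simultaneously, the way $P$ sits relative to $Q$ and the way $R$ sits relative to $Q$ are each unchanged between $\cT$ and $\cT'$. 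In particular the freedom that {\tbr} normally grants to re-root the pruned block is removed: the attachment of $P$ on $\cT|P$ is pinned by $\cT|(P\cup Q)=\cT'|(P\cup Q)$, and likewise for $R$ by $\cT|(Q\cup R)=\cT'|(Q\cup R)$.

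The hard part will be to show that these simultaneous constraints leave room only for an {\nni} move. I would compare the edge $\beta_P$ of $\cT|Q$ to which $P$ attaches with the edge $\beta_R$ of $\cT|Q$ to which $R$ attaches; both are intrinsic to the preserved subtrees $\cT|(P\cup Q)$ and $\cT|(Q\cup R)$, and hence are the same in $\cT$ and in $\cT'$. If $\beta_P\neq\beta_R$, then both $\cT$ and $\cT'$ are reconstructed by independently subdividing these two distinct edges of $\cT|Q$ and re-attaching the rooted blocks $P$ and $R$; the reconstruction is then forced, giving $\cT=\cT'$ and contradicting that $\cT'=\theta(\cT)\neq\cT$. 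Hence $\beta_P=\beta_R=:\beta$, and the only remaining degree of freedom is the order in which the attachments of $P$ and $R$ appear along $\beta$; since $\cT\neq\cT'$, these orders must be opposite. This is precisely the configuration of Fig.~\ref{fignniex}, with $P$ and $R$ the two pendant blocks at opposite ends of the internal edge created on $\beta$ between the two attachment points, and passing from $\cT$ to $\cT'$ interchanges them. Thus $\theta$, which prunes $\cT|P$ and regrafts it across this edge, swaps $\cT|P$ with the cluster $R\neq P$ and is therefore an {\nni} operation, so $\theta\in\cO_{\nni}(\cT)$. I would finish by disposing of the degenerate cases separately, for instance $|Q|=1$ or $e$ and $e'$ sharing a vertex; these are routine and lead either to the same {\nni} conclusion or to the excluded equality $\cT=\cT'$.
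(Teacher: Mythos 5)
Your decomposition into the blocks $P,Q,R$ is exactly the paper's setup (its $A$, $A'-A$ and $B'$), and your trichotomy on the attachment edges $\beta_P,\beta_R$ of $\cT|Q$ mirrors the paper's case analysis on the length $k$ of its cluster chain: your case $\beta_P\neq\beta_R$ is its case $k\geq 3$, and your rigidity argument there is sound — with the two attachment edges fixed and distinct, $\cT$ and $\cT'$ admit the same forced reconstruction from the common data, contradicting $\theta(\cT)\neq\cT$.

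The gap is in the case $\beta_P=\beta_R=\beta$. You assert that ``the only remaining degree of freedom is the order in which the attachments of $P$ and $R$ appear along $\beta$,'' so that $\cT\neq\cT'$ forces opposite orders and hence the {\nni} swapping $\cT|P$ with $\cT|R$. But a third configuration is consistent with all four agreement conditions: in $\cT'$ the attachment points of $P$ and $R$ on $\beta$ may \emph{coincide}, with $P$ and $R$ hanging as a cherry-like pair from a single point of $\beta$ (restricting away either block suppresses the common stem vertex, so $\cT'|(P\cup Q)$ and $\cT'|(Q\cup R)$ are exactly as required). Concretely, take $n=6$, $P=\{1,2\}$, $R=\{5,6\}$, $Q=\{3,4\}$, let $\cT$ be the caterpillar with blocks in the order $3,P,R,4$, and let $\cT'$ be the tree with nontrivial splits $\{1,2\}$, $\{5,6\}$, $\{1,2,5,6\}$ against their complements: deleting the root edge of $P$ (respectively of $R$) and reattaching gives two distinct {\tbr} operations with common output $\cT'$; the deleted edges are not adjacent in $\cT$ and $|Q|=2$, so this instance lies squarely in your main case, yet $\cT'$ is not the ``opposite order'' tree. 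The lemma still holds here, but the witnessing swap is of $\cT|P$ with the pendant subtree of $\cT|Q$ at the end of $\beta$ beyond $R$'s attachment (in the example, swap $\{1,2\}$ with the leaf $4$), not of $\cT|P$ with $\cT|R$, so it needs its own argument. Your closing list of degenerate cases ($|Q|=1$, or $e$ and $e'$ sharing a vertex) does not catch this, because those are conditions on $\cT$, whereas the coincidence here occurs in $\cT'$. The paper's proof avoids the issue by directly enumerating the admissible regraft targets in its $k=2$ case, which yields both alternatives: ``swapping $\cT|A$ and $\cT|B'$'' (your case) and ``swapping $\cT|A$ and $\cT|(A_2-A_1)$'' (the one you are missing).
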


\begin{proof}
Suppose that $A|B$ is the split of $\cT$ induced by $\theta$, and that $A'|B'$ is the split induced by $\theta'$. Then $A|B\not= A'|B'$ as otherwise $\theta(\cT)$ must be distinct from $\theta'(\cT)$. Hence we may assume that $A\subset A'$, and hence also $B'\subset B$. Since $\cT|A'=\theta(\cT)|A'$, we have immediately that $\theta\in\cO_{\spr}(\cT)$. Let $A_0=A,A_1,\ldots,A_k=A'$ be clusters of $\cT$ such that
\begin{itemize}
\item[(i)]$A_i|B'$ is a partial split of $\cT$; and
\item[(ii)]$A_{i+1}$ is a minimal cluster of $\cT$ that contains $A_i$.
\end{itemize}

\begin{figure}[ht]
\begin{center}
\setlength{\unitlength}{5mm}
\begin{picture}(14,5)(0,0)
 \thicklines
 \put(3,2.5){\line(0,1){1}}
 \put(3,2.5){\line(2,1){1}}
 \put(3,3.5){\line(2,-1){1}}
 \put(4,3){\line(1,0){2}}
 \multiput(5,3)(4,0){2}{
 \put(0,0){\line(0,-1){1}}
 \put(0,-1){\line(-1,-2){0.5}}
 \put(0,-1){\line(1,-2){0.5}}
 \put(-0.5,-2){\line(1,0){1}}}
 \multiput(6,3)(0.6,0){3}{
 \put(0.2,0){\line(1,0){0.4}}}
 \put(8,3){\line(1,0){2}}
 \put(10,3){\line(2,1){1}}
 \put(10,3){\line(2,-1){1}}
 \put(11,2.5){\line(0,1){1}}
 \put(2,2.75){$A_0$}
 \put(3.7,0.25){$A_1-A_0$}
 \put(7.7,0.25){$A_k-A_{k-1}$}
 \put(9.3,3.4){{\small $f$}}
 \put(8.5,2.2){{\small $e$}}
 \put(11.2,2.75){$B'$}
\end{picture}
\end{center}
\caption{The tree $\cT$ in the proof of Lemma~\ref{lemnni}.}
\label{figlemma}
\end{figure}
The generic structure of $\cT$ is depicted in Fig.~\ref{figlemma},  where $f$ and $e$ are two edges and whose removal will result in the split $A'|B'$ and $(A_{k}-A_{k-1})|(X-(A_{k}-A_{k-1}))$, respectively. Now consider the operation $\theta$.
 If $k\geq 3$ then in order for $\cT|A'=\theta(\cT)|A'$ to hold, we must regraft the pruned subtree $\cT|A$ in the same place, but this implies $\cT=\theta(\cT)$, a contradiction.  If $k=2$, to ensure  $\cT|A'=\theta(\cT)|A'$, we must regraft $\cT|A$ to the edge $e$ or $f$. In other words,  $\theta(\cT)$ is obtained from $\cT$ by swapping either the subtrees $\cT|A$ and $\cT|B'$, or $\cT|A$ and $\cT|(A_2-A_1)$, from which it follows that $\theta$ is an {\nni} operation. Now it remains to establish the case $k=1$. To this end, we can further assume that $|A_1-A_0|>1$, because otherwise $\cT|A'=\theta(\cT)|A'$ implies the contradiction $\cT=\theta(\cT)$. Therefore the generic structure of $\cT$ in this case can be represented as in Fig.~\ref{fig:special}, where $C_1\cup C_2=A_1-A_0$. Using the constraint $\cT \not = \theta(\cT)$ and $\cT|A'=\theta(\cT)|A'$ again, we can assert that $\cT|A$ must be regrafted to either $e_1$ or $e_2$. This completes the proof as in both cases $\theta$ is an {\nni} operation.
 \end{proof}
 
 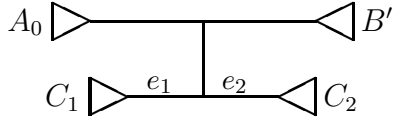
\begin{figure}[ht]
\begin{center}
\setlength{\unitlength}{5mm}
\begin{picture}(14,5)(0,0)
 \thicklines
 \multiput(0,0)(1,-2){2}{
 \put(3,2.5){\line(0,1){1}}
 \put(3,2.5){\line(2,1){1}}
 \put(3,3.5){\line(2,-1){1}}
}
\multiput(0,0)(-1,-2){2}{
 \put(10,3){\line(2,1){1}}
 \put(10,3){\line(2,-1){1}}
 \put(11,2.5){\line(0,1){1}}	 
 }
\put(4,3){\line(1,0){6}}
\put(5,1){\line(1,0){4}}
\put(7,3){\line(0,-1){2}}
 
 \put(1.8,2.75){$A_0$}
 \put(2.8,0.75){$C_1$}
 \put(5.5,1.2){{\small $e_1$}}
 \put(7.5,1.2){{\small $e_2$}}
 \put(11.2,2.75){$B'$}
 \put(10.2,0.75){$C_2$}
\end{picture}
\end{center}
\caption{The tree $\cT$ for the case $k=1$ in the proof of Lemma~\ref{lemnni}.}
\label{fig:special}
\end{figure}

As a consequence of Lemma~\ref{lemnni}, we can express the sizes of both the {\spr} and the {\tbr} {\nhd}s in terms of the number of each operation for a tree and the size of the {\nni} neighbourhood.

\begin{lemma}\label{lemdifference}
For $\cT\in\sT_n$, where $n\geq 4$, we have
\begin{align*}
|N_{\spr}(\cT)|&=|\cO_{\spr}(\cT)|-3|N_{\nni}(\cT)|,
\end{align*}
and
\begin{align*}
|N_{\tbr}(\cT)|&=|\cO_{\tbr}(\cT)|-3|N_{\nni}(\cT)|.
\end{align*}
\end{lemma}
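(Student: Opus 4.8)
The plan is to count the elements of $\cO_{\tbr}(\cT)$ by grouping them according to the tree they produce; that is, to analyse the fibres of the surjective evaluation map $\theta \mapsto \theta(\cT)$ from $\cO_{\tbr}(\cT)$ onto $N_{\tbr}(\cT)$. Lemma~\ref{lemnni} records exactly when two distinct operations share an output, so it should pin down every fibre size, and summing the fibre sizes recovers $|\cO_{\tbr}(\cT)|$ in terms of $|N_{\tbr}(\cT)|$ and $|N_{\nni}(\cT)|$.

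First I would establish a dichotomy for the trees $\cT' \in N_{\tbr}(\cT)$. Suppose $\cT'$ has two distinct preimages $\theta_1,\theta_2$. Applying Lemma~\ref{lemnni} to the pair $(\theta_1,\theta_2)$ and then to $(\theta_2,\theta_1)$ shows that both are {\nni} operations; repeating this against any further preimage forces every preimage of such a $\cT'$ to be an {\nni} operation. Contrapositively, any $\cT'$ admitting a non-{\nni} preimage has that preimage as its unique preimage, so its fibre has size exactly one. I would then identify the remaining trees—those all of whose preimages are {\nni}—with the set $N_{\nni}(\cT)$: any such $\cT'$ lies in $N_{\nni}(\cT)$ because it has an {\nni} preimage, and conversely no tree in $N_{\nni}(\cT)$ can acquire a non-{\nni} preimage without contradicting the dichotomy just established.

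With the partition $N_{\tbr}(\cT) = \big(N_{\tbr}(\cT)\setminus N_{\nni}(\cT)\big) \,\sqcup\, N_{\nni}(\cT)$ in hand, I would read off the fibre sizes. Each tree in $N_{\tbr}(\cT)\setminus N_{\nni}(\cT)$ has a single preimage, while each tree in $N_{\nni}(\cT)$ has fibre size exactly four: the observation recorded in Section~\ref{background} supplies precisely four {\nni} operations producing any given {\nni} neighbour, and the dichotomy guarantees these are \emph{all} of its {\tbr} preimages. Summing over fibres then gives $|\cO_{\tbr}(\cT)| = \big(|N_{\tbr}(\cT)| - |N_{\nni}(\cT)|\big) + 4|N_{\nni}(\cT)| = |N_{\tbr}(\cT)| + 3|N_{\nni}(\cT)|$, which rearranges to the claimed {\tbr} identity.

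For the {\spr} identity I would run the identical argument on the restricted evaluation map $\theta \mapsto \theta(\cT)$ from $\cO_{\spr}(\cT)$ onto $N_{\spr}(\cT)$. This transfers without change: since every {\nni} operation is an {\spr} operation, Lemma~\ref{lemnni} applies verbatim to any two distinct {\spr} operations with the same output, and since $\cO_{\nni}(\cT) \subseteq \cO_{\spr}(\cT)$ each {\nni} neighbour retains all four of its {\nni} preimages inside $\cO_{\spr}(\cT)$. The step I expect to require the most care is not the final algebra but the logical bookkeeping behind the dichotomy—specifically, ruling out any tree possessing both an {\nni} preimage and a genuinely non-{\nni} preimage—since it is exactly this that forces every fibre to have size one or four, with nothing in between.
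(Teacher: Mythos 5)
Your proposal is correct and is exactly the paper's argument, spelled out in full: the paper's one-line proof invokes Lemma~\ref{lemnni} together with the four-preimage observation from Section~\ref{background}, which is precisely your fibre-counting dichotomy (fibre size one off $N_{\nni}(\cT)$, fibre size four on it). The careful handling you flag—ruling out a tree with both an {\nni} and a non-{\nni} preimage—is indeed the substance hidden in the paper's terse citation of Lemma~\ref{lemnni}, and you have resolved it correctly.
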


\begin{proof}
This follows from Lemma~\ref{lemnni} and the observation {   in Section~\ref{background}} that, if $\theta$ is an {\nni} operation for $\cT$, then there are precisely four distinct operations $\theta'\in\cO_{\nni}(\cT)$ such that $\theta(\cT)=\theta'(\cT)$.
\end{proof}

This lemma forms the basis of the two key results for this section. Both the number of distinct {\spr} operations and the number of distinct {\tbr} operations for any given tree can be found relatively easily. We proceed with the {\spr} case first.

\begin{theorem}\label{thmspr}
For a tree $\cT\in\sT_n$ where $n\geq 4$, we have
\begin{align*}
|\cO_{\spr}(\cT)|&=4(n-2)(n-3).
\end{align*}
\end{theorem}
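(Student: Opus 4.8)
The plan is to count \textsc{spr} operations directly by summing over all possible bisecting edges, and for each such edge, counting the number of valid reconnections. Recall that an \textsc{spr} operation is specified by an ordered pair: the edge $e$ that is deleted (inducing a split $X_1|X_2$) together with a choice of which component becomes the rooted pruned subtree and where it is regrafted. So I would organize the count as $|\cO_{\spr}(\cT)| = \sum_{e} (\text{number of \textsc{spr} operations that delete } e)$.

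First I would fix an edge $e$ of $\cT$, which upon deletion yields two subtrees, and observe that each component can independently play the role of the pruned rooted subtree. Consider pruning one component, say $\cT|X_2$, leaving the other component $\cT|X_1$ together with the pendant edge that previously held $e$. The key observation is that, after suppressing the degree-two vertex created on the $\cT|X_1$ side, regrafting $\cT|X_2$ amounts to subdividing some edge of the resulting tree on the $X_1$-side and attaching the root of $\cT|X_2$ there. If $\cT|X_1$ has $n_1$ leaves, then after pruning it has $2n_1-3$ edges available as regraft targets (using the edge-count formula $2m-3$ for a binary tree with $m$ leaves quoted in Section~\ref{background}); however, exactly one of these reattachments reproduces $\cT$ itself and one reproduces an \textsc{nni}-equivalent placement, so I would need to subtract the targets that either recreate $\cT$ or are disallowed. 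The cleanest formulation is that reattaching to the unique edge adjacent to the original attachment point on each side gives back $\cT$, so the number of \emph{nontrivial} regraftings of $\cT|X_2$ onto $\cT|X_1$ is $2n_1-3-1 = 2n_1-4 = 2(n_1-2)$.

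Next I would sum over the two choices of which component is pruned and over all edges. Writing the two components of $\cT$ after deleting an edge $e$ as having $n_1$ and $n_2 = n-n_1$ leaves, the contribution from $e$ is $2(n_1-2) + 2(n_2-2) = 2(n-4)$, which is pleasingly independent of how $e$ splits the leaves. Since a binary tree with $n$ leaves has $2n-3$ edges, naively this gives $(2n-3)\cdot 2(n-4)$; but the edges incident to leaves (the $n$ pendant edges) behave differently, since deleting a pendant edge isolates a single labelled vertex, so only one component is a nontrivial subtree and the count must be handled separately. The main obstacle, and the step I expect to require the most care, is precisely this bookkeeping over edge types and over the degenerate cases where a component is a single leaf or a cherry: I would partition the edges of $\cT$ into the $n-3$ internal edges and the $n$ pendant edges, compute the reconnection count in each regime, and verify that the overall sum collapses to $4(n-2)(n-3)$.

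Finally I would reconcile the direct count with the expected closed form $4(n-2)(n-3)$ and cross-check it against Lemma~\ref{lemdifference} together with Robinson's value $|N_{\nni}(\cT)| = 2n-6$: substituting should recover the known Allen--Steel value $|N_{\spr}(\cT)| = 2(n-3)(2n-7)$, giving an independent confirmation that $|\cO_{\spr}(\cT)| = |N_{\spr}(\cT)| + 3|N_{\nni}(\cT)| = 2(n-3)(2n-7) + 3(2n-6) = 4(n-2)(n-3)$. This consistency check is reassuring, but since it presupposes the answer, the substance of the proof remains the direct enumeration, whose crux is the careful treatment of pendant versus internal bisecting edges described above.
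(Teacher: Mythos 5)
Your proposal is correct and is essentially the paper's own proof: your partition of edges into the $n$ pendant edges and the $n-3$ internal edges is exactly the paper's case split into trivial and non-trivial splits, with the same per-edge counts $2n-6$ and $2(n_1-2)+2(n_2-2)=2n-8$ summing to $n(2n-6)+(n-3)(2n-8)=4(n-2)(n-3)$. One small remark: you should drop the aside about subtracting an ``\nni-equivalent placement'' --- since $\cO_{\spr}(\cT)$ counts \emph{operations} rather than output trees, the only reattachment to exclude is the single one that reproduces $\cT$, which is what your final count $2n_1-4$ correctly does.
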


\begin{proof}
We consider two possible {\spr} operations on $\cT$, firstly
those that induce a trivial split on $\cT$, and secondly those that
induce a non-trivial split. In the first case, there are $n$ possible
leaves that can be pruned from $\cT$, and for each leaf $x$ there are $2n-6$ edges in
$\cT-x$ to which we can reconnect it so that the resulting tree is
different from $\cT$.

In the second case, suppose that the non-trivial split is $A|B$,
with $|A|=a$ and $|B|=b$. If we choose $\cT|A$ to be the pruned subtree,
then there are $2b-3$ edges to which we can regraft $\cT|A$.
However, one of these results in the same tree as we began with,
namely $\cT$. Thus there are $2b-4$ such distinct operations.
Similarly, if we choose $\cT|B$ as the pruned subtree, then there
are $2a-4$ possible {\spr} operations. Thus there are $2n-8$
distinct {\spr} operations for each of the $n-3$ non-trivial
splits of $\cT$. Hence
\begin{align*}
|\cO_{\spr}(\cT)|&=n(2n-6)+(n-3)(2n-8)\\
&=4(n-2)(n-3).
\end{align*}
\end{proof}

As a corollary to this theorem, we obtain the result of Allen and Steel's \cite{all01} for the size of the {\spr} {\nhd}. The proof is omitted, as it follows trivially from the size of {\nni} neighbourhood (see Section~\ref{intro}), Lemma~\ref{lemdifference} and Theorem~\ref{thmspr}.

\begin{corollary}[Theorem~2.1, \cite{all01}]\label{corspr}
For $\cT\in\sT_n$ where $n\geq 4$, we have
\begin{align*}
|N_{\spr}(\cT)|&=2(n-3)(2n-7).
\end{align*}
\end{corollary}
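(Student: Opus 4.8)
The plan is to combine the three ingredients that have just been assembled, since the corollary is stated to follow immediately. First I would recall the three inputs: Robinson's formula $|N_{\nni}(\cT)|=2n-6$ quoted in Section~\ref{intro}, the relation $|N_{\spr}(\cT)|=|\cO_{\spr}(\cT)|-3|N_{\nni}(\cT)|$ established in Lemma~\ref{lemdifference}, and the count $|\cO_{\spr}(\cT)|=4(n-2)(n-3)$ from Theorem~\ref{thmspr}. All three hold for $n\geq 4$, which matches the hypothesis of the corollary, so there is no range mismatch to worry about.

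The single computation is to substitute the latter two expressions into the first:
\begin{align*}
|N_{\spr}(\cT)|&=4(n-2)(n-3)-3(2n-6)\\
&=4(n-2)(n-3)-6(n-3)\\
&=(n-3)\bigl(4(n-2)-6\bigr)\\
&=(n-3)(4n-14)\\
&=2(n-3)(2n-7).
\end{align*}
Factoring out $(n-3)$ after writing $3(2n-6)=6(n-3)$ is the only mildly non-obvious manipulation, and it is elementary.

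The honest answer to where the difficulty lies is that there is essentially no obstacle here: the corollary is a bookkeeping consequence of results already proved, which is precisely why the paper omits the proof. All the conceptual work has already been done, chiefly in Lemma~\ref{lemnni}, where one must verify that the only redundant {\tbr} operations are {\nni} operations (so that the overcounting correction in Lemma~\ref{lemdifference} is exactly $3|N_{\nni}(\cT)|$ rather than something messier), and in the case analysis of Theorem~\ref{thmspr}, where the trivial and non-trivial splits are enumerated separately and the self-returning operation is subtracted off in each branch. Given those, the corollary reduces to the arithmetic displayed above, matching Allen and Steel's original value and thereby reproving their Theorem~2.1 by an independent route.
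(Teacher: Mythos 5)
Your proposal is correct and follows exactly the route the paper intends: the paper omits the proof precisely because, as it states, the corollary follows trivially from Robinson's formula $|N_{\nni}(\cT)|=2n-6$, Lemma~\ref{lemdifference}, and Theorem~\ref{thmspr}, and your substitution $4(n-2)(n-3)-3(2n-6)=2(n-3)(2n-7)$ is the intended computation. Nothing is missing.
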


We require one further idea before tackling the {\tbr} problem. For a binary tree $\cT$, we define $\Gamma(\cT)$ by
\begin{align*}
\Gamma(\cT)&=\sum |A|\cdot|B|,
\end{align*}
where the sum is taken over all non-trivial splits $A|B$ of $\cT$. This quantity is closely related to the Wiener index which arose out of chemical graph theory \cite{dob01}.

\begin{theorem}\label{thmtbr}
For a tree $\cT\in\sT_n$ where $n\geq 4$, we have
\begin{align*}
|\cO_{\tbr}(\cT)|&=4\Gamma(\cT)-4(n-2)(n-3).
\end{align*}
\end{theorem}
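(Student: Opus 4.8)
The plan is to compute $|\cO_{\tbr}(\cT)|$ by summing, over every edge of $\cT$, the number of admissible reconnections following deletion of that edge. Since $\cT$ is binary, each of its $2n-3$ edges induces a unique split, so no bisection is counted twice; I would then split the $2n-3$ edges into the $n$ pendant edges (inducing trivial splits) and the $n-3$ internal edges (inducing non-trivial splits), mirroring the structure of the proof of Theorem~\ref{thmspr}. Because the quantity in question counts operations rather than neighbours, the coincidences $\theta(\cT)=\theta'(\cT)$ identified in Lemma~\ref{lemnni} play no role at this stage; they enter only later, through Lemma~\ref{lemdifference}.

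For a fixed deleted edge with split $A|B$, where $|A|=a$ and $|B|=b$, the reconnection is determined by a choice of subdivision point in each of the two suppressed components $\cT|A$ and $\cT|B$. Here I would invoke the observation recorded in Section~\ref{background} that a binary tree on $k\geq 2$ leaves has exactly $2k-3$ edges, each of which is a candidate subdivision point, while a component consisting of a single leaf offers exactly one attachment point. Thus a non-trivial split offers $(2a-3)(2b-3)$ reconnections, whereas a trivial split offers $1\cdot(2(n-1)-3)=2n-5$, before excluding the reconnections that return $\cT$.

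The step I expect to be the main obstacle is justifying that exactly one reconnection per deleted edge reproduces $\cT$, so that one subtracts precisely $1$ in each case. I would argue this by reversibility: in any output tree $\theta(\cT)$ the bipartition $A|B$ is carried by a unique edge $f$, since a binary tree realises each split on a single edge, and deleting $f$ together with suppression recovers $\cT|A$ and $\cT|B$ along with the two subdivision points that were used. Hence the map from reconnections to output trees is injective, and the unique reconnection that re-inserts the edge at the points created by suppressing the endpoints of the deleted edge is the only one yielding $\cT$ itself. This both legitimises the subtraction of $1$ and confirms that, for a fixed bisection, distinct reconnections are genuinely distinct operations.

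It then remains to assemble the sum, which is routine bookkeeping. The trivial splits contribute $n(2n-6)$, exactly as in the {\spr} count. Writing $a+b=n$ and expanding, the non-trivial splits contribute
\[
\sum_{A|B}\bigl[(2a-3)(2b-3)-1\bigr]=4\sum_{A|B}ab-(6n-8)(n-3)=4\Gamma(\cT)-(6n-8)(n-3),
\]
where both sums range over the non-trivial splits and I have used $\sum_{A|B}|A|\cdot|B|=\Gamma(\cT)$. Adding the two contributions and simplifying the polynomial part via $n(2n-6)-(6n-8)(n-3)=-4(n-2)(n-3)$ yields $|\cO_{\tbr}(\cT)|=4\Gamma(\cT)-4(n-2)(n-3)$, as required.
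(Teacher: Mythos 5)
Your proof is correct and follows essentially the same route as the paper's: the same decomposition into trivial and non-trivial splits, the same counts $n(2n-6)$ and $(2a-3)(2b-3)-1$, and the same final algebra. The only difference is that you explicitly justify, via the reversibility argument, that exactly one reconnection re-forms $\cT$ and that distinct reconnections yield distinct output trees for a fixed bisection --- a point the paper's proof asserts without further comment.
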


\begin{proof}
We consider two possible {\tbr} operations on $\cT$, firstly
those that induce a trivial split on $\cT$, and secondly those that
induce a non-trivial split. The argument in the first case is identical to that given in the proof of Theorem~\ref{thmspr}, and gives $n(2n-6)$ distinct {\tbr} operations.

Now, let $A|B$ be some non-trivial split of $\cT$ induced by the
edge $e$. Then when we bisect $\cT$ by deleting $e$, there are
$2|A|-3$ edges in one component of the resulting forest and $2|B|-3$ edges in
the other. Hence, there are $(2|A|-3)(2|B|-3)$ ways to choose an
edge from each of $\cT|A$ and $\cT|B$. Precisely one of these
results in re-forming $\cT$. Hence, by taking a sum over all
non-trivial splits $A|B$ of $\cT$, we get
\begin{align*}
|\cO_{\tbr}(\cT)|&=n(2n-6)+\sum\left[(2|A|-3)(2|B|-3)-1\right]\\
&=4\Gamma(\cT)-4(n-2)(n-3).
\end{align*}
\end{proof}

This brings us to the following {   key result in this paper, which relates the size of the} {\tbr} {   neighbourhood of a phylogenetic tree to its shape, and provides an effective way to calculate this quantity}. 
Also, as we will see in the next section, Theorem~\ref{cortbr} gives us enough traction to characterise the trees that respectively maximise and minimise the size of the {\tbr} neighbourhood.

\begin{theorem}\label{cortbr}
For $\cT\in\sT_n$ where $n\geq 4$, we have
\begin{align*}
|N_{\rm TBR}(\cT)|&=4\Gamma(\cT)-(4n-2)(n-3).
\end{align*}
\end{theorem}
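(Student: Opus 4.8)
The plan is to obtain the result by assembling the three ingredients already established in this section, so that no new counting is required. By Lemma~\ref{lemdifference} we have the identity $|N_{\tbr}(\cT)| = |\cO_{\tbr}(\cT)| - 3|N_{\nni}(\cT)|$, and this reduces the entire task to substituting known expressions for the two quantities on the right-hand side and then simplifying. The conceptual content—that the redundant {\tbr} operations are precisely the {\nni} ones, each output tree being hit by exactly four {\nni} operations—has already been absorbed into that identity via Lemma~\ref{lemnni}.

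First I would invoke Theorem~\ref{thmtbr} to replace $|\cO_{\tbr}(\cT)|$ by $4\Gamma(\cT) - 4(n-2)(n-3)$. Next I would use Robinson's result recalled in Section~\ref{intro}, namely that $|N_{\nni}(\cT)| = 2n-6$ for every $\cT \in \sT_n$ with $n \geq 4$. This is precisely the point at which the hypothesis $n \geq 4$ enters, since it guarantees both that the {\nni} {\nhd} has the stated size and that the earlier lemmas of this section apply.

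The remaining step is purely algebraic. Substituting both expressions would give
\begin{align*}
|N_{\tbr}(\cT)| &= \left[4\Gamma(\cT) - 4(n-2)(n-3)\right] - 3(2n-6)\\
&= 4\Gamma(\cT) - 4(n-2)(n-3) - 6(n-3).
\end{align*}
Factoring $(n-3)$ out of the two constant terms leaves $4(n-2) + 6 = 4n-2$, so that
\begin{align*}
|N_{\tbr}(\cT)| &= 4\Gamma(\cT) - (4n-2)(n-3),
\end{align*}
which is the claimed formula.

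Since every step is a direct substitution, I expect no genuine obstacle here: all the difficulty was dispatched earlier, in Lemma~\ref{lemnni} (identifying the redundant operations), in Lemma~\ref{lemdifference} (the multiplicity bookkeeping), and in Theorem~\ref{thmtbr} (evaluating $|\cO_{\tbr}(\cT)|$ through the quantity $\Gamma(\cT)$). The only point demanding care is the arithmetic of combining the non-$\Gamma$ terms, and in particular verifying that $4(n-2)+6$ collapses to $4n-2$ so that the coefficient of $(n-3)$ comes out correctly.
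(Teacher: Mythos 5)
Your proposal is correct and follows exactly the paper's own route: the paper proves Theorem~\ref{cortbr} by combining the size of the {\nni} neighbourhood ($2n-6$), Lemma~\ref{lemdifference}, and Theorem~\ref{thmtbr}, which is precisely your argument with the algebra written out. The simplification $4(n-2)(n-3)+6(n-3)=(4n-2)(n-3)$ checks out, so nothing is missing.
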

\begin{proof}
This follows immediately from the size of the {\nni} neighbourhood, Lemma~\ref{lemdifference} and Theorem~\ref{thmtbr}.
\end{proof}

\section{Characterisations of the Extremal Cases}
\label{extreme}

Since the size of the {\tbr} {\nhd} for $\cT$ is dependent on both the number of leaves in $\cT$ and the shape of $\cT$, it makes sense to characterise which tree shapes give the extreme values for this size. As a consequence of Theorem~\ref{cortbr}, it suffices to determine which tree shapes maximise and minimise the size of $\Gamma(\cT)$ over all trees in $\sT_n$ for some $n$. We begin with the easier case, that is, finding the trees that maximise $\Gamma(\cT)$.

\begin{lemma}\label{lemmaximise}
Let $\cT\in\sT_n$ be a tree such that $\Gamma(\cT)\geq \Gamma(\cT')$ for all $\cT'\in\sT_n$. Then $\cT$ is a caterpillar.
\end{lemma}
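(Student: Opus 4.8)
The plan is to argue by contradiction: assuming a maximiser $\cT$ is not a caterpillar, I exhibit a single {\nni} operation that strictly increases $\Gamma$. First I rewrite $\Gamma$ using edges. Since the non-trivial splits of $\cT$ are exactly those induced by its $n-3$ internal edges,
\[
\Gamma(\cT)=\sum_{e}|A_e|\cdot|B_e|,
\]
the sum ranging over internal edges $e$, where $A_e|B_e$ is the split induced by $e$. Now consider an internal edge $e=uv$ together with the four subtrees hanging off its endpoints: $A,B$ at $u$ and $C,D$ at $v$, with leaf-set sizes $a,b,c,d$ (so $a+b+c+d=n$). An {\nni} operation across $e$ swaps one of $A,B$ with one of $C,D$; say it swaps $B$ and $C$. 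The key observation is that such a swap changes the split of exactly one edge, namely $e$ itself: each of the four edges incident with $u$ or $v$ still separates a subtree of the same size from the rest, and every edge interior to $A,B,C,D$ still sees the same number of leaves on its far side. Hence every term of $\Gamma$ is preserved except the one coming from $e$, which changes from $(a+b)(c+d)$ to $(a+c)(b+d)$.

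A direct expansion gives the clean identity
\[
(a+c)(b+d)-(a+b)(c+d)=(a-d)(b-c),
\]
so the swap increases $\Gamma$ precisely when $(a-d)(b-c)>0$. It then remains to locate, in any non-caterpillar $\cT$, an internal edge and a swap for which this product is strictly positive. By the definition of a caterpillar, if $\cT$ is not one then some non-leaf vertex $\zeta$ has no leaf neighbour, so the three subtrees meeting at $\zeta$ all contain at least two leaves; write their sizes as $x\le y\le w$, each at least $2$. Let $X$ be a smallest such subtree, rooted at an internal vertex $p$, and let the two subtrees hanging at $p$ have sizes $x_1,x_2$ with $x_1+x_2=x$ and $x_1,x_2<x$. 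Applying the above to the edge $e=\zeta p$, with $Y,W$ (sizes $y,w$) at $\zeta$ and the two pieces of $X$ (sizes $x_1,x_2$) at $p$, the swap exchanging $W$ with the size-$x_1$ piece yields
\[
\Delta\Gamma=(y-x_2)(w-x_1)>0,
\]
since $y\ge x>x_2$ and $w\ge x>x_1$. This contradicts the maximality of $\Gamma(\cT)$, so $\cT$ must be a caterpillar.

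The only real work is the invariance claim in the first paragraph: that a single {\nni} swap alters the split of exactly one internal edge and leaves every other split size untouched, which is what collapses the change in $\Gamma$ to the single term $(a-d)(b-c)$. I would verify this by checking the three classes of edges separately (the four edges meeting $u$ or $v$; the edges interior to the hanging subtrees; and $e$ itself), using that the far-side leaf count of any edge other than $e$ is unaffected by the swap. Everything after that is the elementary identity above together with the structural fact that a non-caterpillar has an internal vertex all of whose neighbours are internal; the choice of a \emph{smallest} subtree at $\zeta$ is exactly what forces both factors $y-x_2$ and $w-x_1$ to be positive.
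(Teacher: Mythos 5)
Your proof is correct, and it takes a genuinely different route from the paper's. The paper argues globally: it writes $\cT$ with two cherries $\{x_1,x_2\}$ and $\{x_3,x_4\}$ joined by a spine with hanging subtrees $Y_1,\ldots,Y_k$, takes the first index $i$ with $|Y_i|>1$, and relocates the whole subtree $\cT|Y_i$ to the position adjacent to $x_1$ (an {\spr}-type move); this changes $i$ of the spine splits at once, and the proof evaluates the resulting sum to get $\Gamma(\cT)-\Gamma(\cT')=i(1-|Y_i|)(n-|Y_i|-i-2)<0$, a contradiction. You instead use the most local move available: a single {\nni} swap across one internal edge, which changes exactly one term of $\Gamma$, via the identity $(a+c)(b+d)-(a+b)(c+d)=(a-d)(b-c)$; the structural input is that a non-caterpillar has an internal vertex $\zeta$ with no leaf neighbour, and choosing the smallest of the three subtrees at $\zeta$ forces both factors to be positive. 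Your invariance claim (only the split at $e$ changes) is sound: every edge other than $e$ has the same far-side leaf set before and after the swap, and the split at $e$ remains non-trivial on both sides, so the sum defining $\Gamma$ over non-trivial splits is altered in exactly one term. What your route buys: the computation collapses to a one-line identity rather than a telescoping sum along the spine, and it in fact proves something slightly stronger, namely that any tree that is merely a local maximum of $\Gamma$ with respect to {\nni} moves must already be a caterpillar. What the paper's route buys: its relocate-and-compare computation along a path is the same template reused in Lemma~\ref{lemgamma} for the minimisation direction, so the two extremal characterisations share machinery.
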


\begin{proof}
Suppose that $\{x_1,x_2\}$ and $\{x_3,x_4\}$ are cherries of $\cT$, and let the sets $Y_1,\ldots,Y_k$ partition the remaining leaves so that $\cT$ can be represented as in Fig.~\ref{figcat}. 
\begin{figure}[ht]
\begin{center}
\setlength{\unitlength}{5mm}
\begin{picture}(14,6)(1,1)
 \thicklines
 \put(3,4){\line(1,0){4}}
 \put(4,4){\line(0,-1){1}}
 \multiput(6,4)(4,0){2}{
 \put(0,0){\line(0,-1){1}}
 \put(0,-1){\line(-1,-2){0.5}}
 \put(0,-1){\line(1,-2){0.5}}
 \put(-0.5,-2){\line(1,0){1}}}
 \multiput(7,4)(0.6,0){3}{
 \put(0.2,0){\line(1,0){0.4}}}
 \put(9,4){\line(1,0){4}}
 \put(12,4){\line(0,-1){1}} 
 \put(2,3.8){$x_1$}
 \put(3.7,2.25){$x_2$}
 \put(5.7,1.25){$Y_1$}
 \put(9.7,1.25){$Y_k$}
 \put(11.7,2.25){$x_3$}
 \put(13.2,3.8){$x_4$}
\end{picture}
\end{center}
\caption{The tree $\cT$ in the proof of Lemma~\ref{lemmaximise}.}
\label{figcat}
\end{figure}

Setting $y_i=|Y_i|$, it will suffice to show that $y_i=1$ for all $i$.
{   Assuming otherwise, let  $i\in\{1,\ldots,k\}$ be  the smallest index such that $y_i>1$. Now}
 we form a second tree $\cT'$ by moving the subtree $\cT|Y_i$ to the position adjacent to $x_1$. The tree $\cT'$ is shown in Fig.~\ref{figalt}.
 
 Now, calculating the difference between $\Gamma(\cT)$ and $\Gamma(\cT')$, we find that
\begin{align*}
\Gamma(\cT)-\Gamma(\cT')&=\sum_{j=0}^{i-1}(j+2)(n-j-2)-\sum_{j=0}^{i-1}(y_i+j+1)(n-y_i-j-1)\\
&=\sum_{j=0}^{i-1}[(j+2)n-(j+2)^2-(y_i+j+1)n+(y_i+j+1)^2]\\
&=(1-y_i)\sum_{j=0}^{i-1} (n-y_i-3-2j)\\
&=i(1-y_i)(n-y_i-i-2).
\end{align*}
Since $y_j\geq 1$ for all $j$, we have the inequality
$y_i+(i-1)\leq n-4$,
from which $n-y_i-i-2$ is strictly positive. {   Together with the assumption $y_i>1$,
we conclude that $\Gamma(\cT)<\Gamma(\cT')$, a contradiction as required. Therefore $y_i=1$ indeed holds for all $i\in\{1,\ldots,k\}$}, and $\cT$ is a caterpillar.
 
\begin{figure}[ht]
\begin{center}
\setlength{\unitlength}{5mm}
\begin{picture}(14,6)(1,1)
 \thicklines
 \put(3,3.5){\line(0,1){1}}
 \put(3,3.5){\line(2,1){1}}
 \put(3,4.5){\line(2,-1){1}}
 \put(4,4){\line(1,0){3}}
 \put(5,4){\line(0,-1){1}}
 \put(6,4){\line(0,-1){1}}
 \multiput(7,4)(0.6,0){3}{
 \put(0.2,0){\line(1,0){0.4}}}
 \put(9,4){\line(1,0){4}}
 \put(10,4){\line(0,-1){1}}
 \put(10,3){\line(-1,-2){0.5}}
 \put(10,3){\line(1,-2){0.5}}
 \put(9.5,2){\line(1,0){1}}
 \put(12,4){\line(0,-1){1}}
 \put(2,3.75){$Y_i$}
 \put(4.7,2.25){$x_1$}
 \put(5.7,2.25){$x_2$}
 \put(9.7,1.25){$Y_k$}
 \put(11.7,2.25){$x_3$}
 \put(13.2,3.8){$x_4$}
\end{picture}
\end{center}
\caption{The tree $\cT'$ in the proof of Lemma~\ref{lemmaximise}.}
\label{figalt}
\end{figure}
\end{proof}

Recall that the exact upper bound on the size of $N_{\tbr}(\cT)$ for a tree $\cT\in\sT_n$ was proven in \cite{hum} {   by induction on $n$}. Corollary~\ref{cormaximise} confirms this result {   using a different approach}.
\begin{corollary}[Theorem~2.1, \cite{hum}]\label{cormaximise}
The tree $\cT\in\sT_n$ maximises the size of the {\tbr} {\nhd} over $\sT_n$ if and only if $\cT$ is a caterpillar. Moreover, if $\cT$ is  a caterpillar then
\begin{align*}
|N_{\tbr}(\cT)|&=\frac{2}{3}n^3-4n^2+\frac{16}{3}n+2.
\end{align*}
\end{corollary}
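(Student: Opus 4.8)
The plan is to reduce everything to the two results already in hand, namely the exact formula from Theorem~\ref{cortbr} and the maximisation property of Lemma~\ref{lemmaximise}. The starting point is the observation that, by Theorem~\ref{cortbr}, we have $|N_{\tbr}(\cT)|=4\Gamma(\cT)-(4n-2)(n-3)$, and the subtracted term depends only on $n$. Hence, for trees in a fixed space $\sT_n$, maximising $|N_{\tbr}(\cT)|$ is exactly the same problem as maximising $\Gamma(\cT)$. Lemma~\ref{lemmaximise} then immediately supplies the ``only if'' direction: any tree attaining the maximum value of $|N_{\tbr}(\cT)|$ must attain the maximum value of $\Gamma(\cT)$, and is therefore a caterpillar.

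For the converse direction I would argue that every caterpillar in $\sT_n$ yields the same value of $\Gamma$, so that once one caterpillar is known to maximise $\Gamma$ (as guaranteed by Lemma~\ref{lemmaximise}), all of them do. The key point is that $\Gamma(\cT)$ depends only on the multiset of cluster sizes arising from the non-trivial splits, not on the labelling. First I would identify this multiset for a caterpillar: deleting the successive internal edges along the spine produces exactly the splits with part sizes $\{a,n-a\}$ for $a=2,3,\ldots,n-2$. Since this multiset is independent of which caterpillar we choose, the value of $\Gamma$ is common to all caterpillars, which upgrades the implication of Lemma~\ref{lemmaximise} to a genuine biconditional.

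Finally, to extract the closed form I would evaluate
\begin{align*}
\Gamma(\cT)&=\sum_{a=2}^{n-2}a(n-a)
\end{align*}
for a caterpillar $\cT$. This is a routine polynomial sum: subtracting the two boundary terms (each equal to $n-1$) from the standard evaluation $\sum_{a=1}^{n-1}a(n-a)=\tfrac{1}{6}(n^3-n)$ gives $\Gamma(\cT)=\tfrac{1}{6}(n^3-13n+12)$. Substituting this into the formula of Theorem~\ref{cortbr} and simplifying then yields $|N_{\tbr}(\cT)|=\tfrac{2}{3}n^3-4n^2+\tfrac{16}{3}n+2$, as claimed.

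I do not expect a serious obstacle here, since the heavy lifting has already been done by Theorem~\ref{cortbr} and Lemma~\ref{lemmaximise}. The one point requiring a little care is the converse direction: Lemma~\ref{lemmaximise} only asserts that a maximiser \emph{is} a caterpillar, so to obtain a genuine ``if and only if'' I must separately verify that all caterpillars share the same $\Gamma$-value via the split-size multiset described above. The remaining work is purely the arithmetic of the polynomial sum, which is mechanical.
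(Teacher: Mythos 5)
Your proposal is correct and follows essentially the same route as the paper: reduce the maximisation of $|N_{\tbr}(\cT)|$ to that of $\Gamma(\cT)$ via Theorem~\ref{cortbr}, invoke Lemma~\ref{lemmaximise} for the characterisation, and evaluate $\Gamma(\cT)=\sum_{i=2}^{n-2}i(n-i)$ for a caterpillar to obtain the closed form. Your explicit verification that all caterpillars share the same $\Gamma$-value (so that the ``only if'' of Lemma~\ref{lemmaximise} upgrades to a genuine biconditional) is a point the paper leaves implicit, but it is a refinement of, not a departure from, the paper's argument.
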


\begin{proof}
The first part of the corollary follows from Lemma~\ref{lemmaximise}. To find the size of the neighbourhood, we apply Theorem~\ref{cortbr} from which we have
\begin{align*}
|N_{\rm TBR}(\cT)|&=4\Gamma(\cT)-(4n-2)(n-3)\\
&=4\sum_{i=2}^{n-2}i(n-i)-(4n-2)(n-3)\\
&=\frac{2}{3}n^3-4n^2+\frac{16}{3}n+2.
\end{align*}
\end{proof}

Characterising the trees that minimise the size of the {\tbr} neighbourhood relies heavily on Lemma~\ref{lemgamma}. Before proving this, we give an example of the simplest case of this lemma. Referring to Fig.~\ref{figsimple}, suppose that the sizes of the pendant subtrees labelled by $X_1,\ldots,X_4$ are $x_1,\ldots,x_4$ respectively.
If this tree has a minimal value for $\Gamma(\cT)$, then since $\Gamma(\cT)$ is the sum of $|A|\cdot |B|$ over all non-trivial splits $A|B$, we must have
\begin{align*}
(x_1+x_2)(x_3+x_4)&\leq \min\{(x_1+x_3)(x_2+x_4),(x_1+x_4)(x_2+x_3)\}.
\end{align*}
Assuming without loss of generality that $x_1$ is the smallest of the four quantities, it is easy to show that $x_2$ is the next smallest. Lemma~\ref{lemgamma} extends this observation to a more general result.
\begin{figure}[ht]
\begin{center}
\setlength{\unitlength}{5mm}
\begin{picture}(10,6)(1,1)
 \thicklines
 \put(3,3.5){\line(0,1){1}}
 \put(3,3.5){\line(2,1){1}}
 \put(3,4.5){\line(2,-1){1}}
 \put(4,4){\line(1,0){4}}
 \multiput(5,4)(2,0){2}{
 \put(0,0){\line(0,-1){1}}
 \put(0,-1){\line(-1,-2){0.5}}
 \put(0,-1){\line(1,-2){0.5}}
 \put(-0.5,-2){\line(1,0){1}}}
 \put(8,4){\line(2,1){1}}
 \put(8,4){\line(2,-1){1}}
 \put(9,3.5){\line(0,1){1}}
 \put(2,3.75){$X_1$}
 \put(4.7,1.25){$X_2$}
 \put(6.7,1.25){$X_3$}
 \put(9.2,3.75){$X_4$}
\end{picture}
\end{center}
\caption{A tree illustrating the simplest case of Lemma~\ref{lemgamma}.}
\label{figsimple}
\end{figure}
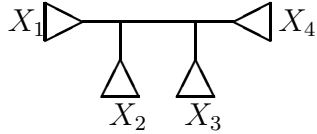

\begin{lemma}\label{lemgamma}
Let $X=\{1,\ldots,n\}$, and let $\cT\in\sT_n$ be such that $\Gamma(\cT)\leq \Gamma(\cT')$ for all $\cT'\in\sT_n$. Further, for some $k\geq 0$ let
$X_1,\ldots,X_4,Y_1,\ldots,Y_k$ partition $X$ such that the
following hold:
\begin{itemize}
\item[(i)]$X_i|(X-X_i)\in\Sigma(\cT)$ for all $i\in\{1,\ldots,4\}$;
\item[(ii)]$Y_i|(X-Y_i)\in\Sigma(\cT)$ for all $i\in\{1,\ldots,k\}$;
and
\item[(iii)]$A_i|(X-A_i)\in\Sigma(\cT)$ for all $i\in\{0,\ldots,k\}$, where $A_0=X_1\cup X_2,A_i=A_{i-1}\cup Y_i$.
\end{itemize}
Then without loss of generality we have $x_1\leq x_2\leq x_3\leq x_4$,
where $x_i=|X_i|$.
\end{lemma}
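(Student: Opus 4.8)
The plan is to extract the constraints on $x_1,\dots,x_4$ directly from the minimality of $\Gamma(\cT)$ by performing subtree swaps. First I would unpack conditions (i)--(iii): together they say that $\cT$ has a caterpillar-like backbone, with the pendant subtrees $\cT|X_1$ and $\cT|X_2$ meeting at one end (forming the cluster $A_0=X_1\cup X_2$), the pendant subtrees $\cT|Y_1,\dots,\cT|Y_k$ attached along a path through the nested clusters $A_i=A_{i-1}\cup Y_i$, and $\cT|X_3,\cT|X_4$ meeting at the far end (since $X_3\cup X_4=X-A_k$). Crucially, each $X_i$ is a cluster, so $\cT|X_i$ is a pendant subtree that can be detached and reattached elsewhere to produce another tree in $\sT_n$.

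The engine of the proof is the following swap. For $i\in\{1,2\}$ and $j\in\{3,4\}$, let $\cT'$ be obtained from $\cT$ by interchanging the pendant subtrees $\cT|X_i$ and $\cT|X_j$, and write $\delta=x_j-x_i$. I would then argue that every split of $\cT$ survives unchanged in $\cT'$ with the same block sizes, \emph{except} the $k+1$ backbone splits $A_\ell|(X-A_\ell)$ for $\ell=0,\dots,k$: each of these has its cluster $A_\ell$ replaced by $(A_\ell\setminus X_i)\cup X_j$, so its size $a_\ell:=|A_\ell|$ shifts by exactly $\delta$. Granting this, a short computation gives
\[
\Gamma(\cT')-\Gamma(\cT)=\sum_{\ell=0}^{k}\bigl[(a_\ell+\delta)(n-a_\ell-\delta)-a_\ell(n-a_\ell)\bigr]=\delta\bigl(T-(k+1)\delta\bigr),
\]
where $T=(k+1)n-2\sum_{\ell=0}^{k}a_\ell$ depends only on $\cT$ and not on the chosen swap. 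The hard part is precisely the verification just invoked: I must check that the internal splits of $\cT|X_i$ and $\cT|X_j$ merely relocate (preserving their block sizes), that the trivial splits $X_i|(X-X_i)$ and $X_j|(X-X_j)$ simply exchange their contributions $x_i(n-x_i)$ and $x_j(n-x_j)$, and that no split other than the backbone ones is affected. This is where the assumption that each $X_i$ is a cluster is used in full.

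To finish, minimality gives $\Gamma(\cT')\ge\Gamma(\cT)$, so $g(\delta):=\delta\bigl(T-(k+1)\delta\bigr)\ge 0$ for each of the four differences $\delta=x_j-x_i$ with $i\in\{1,2\}$ and $j\in\{3,4\}$. Since $g$ is a single fixed downward parabola vanishing at $0$ and at $T/(k+1)$, the inequality $g(\delta)\ge 0$ forces $\delta$ to lie between these roots, hence to carry the same sign as $T$; as all four differences share this one value of $T$, they are either all $\ge 0$ or all $\le 0$ (all $=0$ when $T=0$). In the first case $\max\{x_1,x_2\}\le\min\{x_3,x_4\}$ and in the second the reverse holds. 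The conclusion then follows after relabelling, using that the hypotheses are symmetric under reversing the backbone (which interchanges the two ends) and under reordering within each end, so that we may assume $x_1\le x_2\le x_3\le x_4$.
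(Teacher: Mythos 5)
Your proof is correct, and its engine---swapping a pendant subtree at one end of the backbone with one at the other end and computing the resulting change in $\Gamma$---is the same as the paper's; indeed your formula $\Gamma(\cT')-\Gamma(\cT)=\delta\bigl(T-(k+1)\delta\bigr)$ is exactly the paper's expression $(x_3-x_1)\bigl[2\sum_j b_j-(k+1)(n-x_1-2x_2-x_3)\bigr]$ rewritten via $a_\ell=x_1+x_2+b_\ell$. Where you genuinely differ is in how the conclusion is extracted. The paper relabels so that $x_1\le x_2\le x_3$, supposes the lemma fails (so $x_2>x_4$, and after disposing of the case $x_1=x_3$, also $x_1<x_3$), and then uses only the two swaps $X_1\leftrightarrow X_3$ and $X_2\leftrightarrow X_4$: minimality applied to the first swap, combined with a monotonicity comparison of the two bracketed factors, forces $\Gamma(\cT)-\Gamma(\cT_2)>0$, a contradiction. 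You instead run all four cross-end swaps, observe that they all produce the same downward parabola $g(\delta)=\delta\bigl(T-(k+1)\delta\bigr)$ with $T$ depending only on $\cT$, and read off that all four differences $x_j-x_i$ must lie between the roots $0$ and $T/(k+1)$, hence carry the same weak sign; sorting within each end and possibly reversing the backbone then yields the ordering directly. Your route buys a direct (non-contradiction) argument that treats ties uniformly and avoids the paper's case split and its somewhat delicate chained inequality; the paper's route gets by with two swaps instead of four. One terminological slip: the splits $X_i|(X-X_i)$ that you call ``trivial'' are trivial in the paper's sense only when $|X_i|=1$; this is harmless, since in either case their contribution to $\Gamma$ is the same before and after the swap, but the wording should be fixed.
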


\begin{proof}
{   Swapping the subscripts of $X_i$ if necessary}, we can assume $x_1\leq x_2\leq x_3$.
Supposing that the lemma is false, we have $x_2>x_4$. Then either
$x_1=x_3$, and so $x_1\geq x_2\geq x_3\geq x_4$, contradicting our
assumption that the lemma is false, or $x_1<x_3$.

\begin{figure}[ht]
\begin{center}
\setlength{\unitlength}{5mm}
\begin{picture}(16,6)(1,1)
 \thicklines
 \put(3,3.5){\line(0,1){1}}
 \put(3,3.5){\line(2,1){1}}
 \put(3,4.5){\line(2,-1){1}}
 \put(4,4){\line(1,0){4}}
 \multiput(5,4)(2,0){2}{
 \put(0,0){\line(0,-1){1}}
 \put(0,-1){\line(-1,-2){0.5}}
 \put(0,-1){\line(1,-2){0.5}}
 \put(-0.5,-2){\line(1,0){1}}}
 \multiput(8,4)(0.6,0){3}{
 \put(0.2,0){\line(1,0){0.4}}}
 \put(10,4){\line(1,0){4}}
 \multiput(11,4)(2,0){2}{
 \put(0,0){\line(0,-1){1}}
 \put(0,-1){\line(-1,-2){0.5}}
 \put(0,-1){\line(1,-2){0.5}}
 \put(-0.5,-2){\line(1,0){1}}}
 \put(14,4){\line(2,1){1}}
 \put(14,4){\line(2,-1){1}}
 \put(15,3.5){\line(0,1){1}}
 \put(2,3.75){$X_1$}
 \put(4.7,1.25){$X_2$}
 \put(6.7,1.25){$Y_1$}
 \put(10.7,1.25){$Y_k$}
 \put(12.7,1.25){$X_3$}
 \put(15.2,3.75){$X_4$}
\end{picture}
\end{center}
\caption{The tree $\cT$ in Lemma~\ref{lemgamma}.}
\label{figgamma}
\end{figure}
Figure~\ref{figgamma} shows the general structure of a tree $\cT$ that satisfies the conditions of the lemma.
Let $\cT_1$ be the tree obtained from $\cT$ by swapping the subtrees labelled by $X_1$ and $X_3$, and let $\cT_2$ be similarly obtained by swapping the subtrees $\cT|X_2$ and $\cT|X_4$. Let $y_i=|Y_i|$, and $b_0=0,b_i=b_{i-1}+y_i$. Then we have
\begin{align*}
\Gamma(\cT)-\Gamma(\cT_1)&=\sum_{j=0}^k(x_1+x_2+b_j)(n-x_1-x_2-b_j)\\&\qquad-\sum_{j=0}^k(x_2+x_3+b_j)(n-x_2-x_3-b_j)\\
&=(x_3-x_1)\left[2\sum_{j=0}^k b_j-(k+1)(n-x_1-2x_2-x_3)\right].
\end{align*}
Since we assume that $\Gamma(\cT)\leq\Gamma(\cT_1)$, we get
\begin{align*}
\Gamma(\cT)-\Gamma(\cT_2)&=(x_4-x_2)\left[2\sum_{j=0}^k
b_j-(k+1)(n-2x_1-x_2-x_4)\right]\\
&>(x_4-x_2)\left[2\sum_{j=0}^k b_j-(k+1)(n-x_1-2x_2-x_3)\right]\\
&=\frac{x_4-x_2}{x_3-x_1}\left(\Gamma(\cT)-\Gamma(\cT_1)\right)\\
&\geq 0,
\end{align*}
contradicting the fact that $\Gamma(\cT)\leq \Gamma(\cT_2)$.
\end{proof}

Applying Lemma~\ref{lemgamma}, we can completely characterise those trees $\cT$ that minimise the size of $\Gamma(\cT)$, and therefore those trees that minimise the size of the {\tbr} {\nhd}.

\begin{lemma}\label{lemminimise}
Let $X=\{1,2,\cdots,n\}$ for some $n=\sum_{i=0}^k \alpha_i 2^i$, where
$\alpha_i\in\{0,1\}$ for $0\leq i<k$ and $\alpha_k=1$. Let
$\beta_j=\frac{1}{2^j}\sum_{i=j}^k \alpha_i 2^i$. Let $\cT\in\sT_n$
such that $\Gamma(\cT)\leq\Gamma(\cT')$ for all $\cT'\in\sT_n$. Then
for all $0\leq j\leq k-1$ there is a partition
$X_1,\ldots,X_{\beta_j}$ of $X$ into $\beta_j$ disjoint subsets such
that following properties hold:
\begin{itemize}
\item[(i)]$X_p|(X-X_p)\in\Sigma(\cT)$ for all $1\leq p\leq \beta_j$; and
\item[(ii)]$|X_p|=2^j$ for all $1\leq p<\beta_j$.
\end{itemize}
\end{lemma}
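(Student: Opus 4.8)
The plan is to induct on the level $j$, constructing the level-$(j+1)$ partition from the level-$j$ one. The base case $j=0$ is immediate: $\beta_0=n$, and the partition of $X$ into its $n$ singleton leaves satisfies (i) and (ii) since every leaf is a cluster and every part has size $2^0=1$. Hence all the work is in the inductive step. Assume we have a partition $X_1,\ldots,X_{\beta_j}$ into clusters of $\cT$ with $|X_p|=2^j$ for $1\le p<\beta_j$. Writing the digit recursion $\beta_j=2\beta_{j+1}+\alpha_j$, what I must produce is a coarsening into $\beta_{j+1}$ clusters in which the first $\beta_{j+1}-1$ have size $2^{j+1}$: concretely, I must pair up $2(\beta_{j+1}-1)$ of the size-$2^j$ clusters into $\beta_{j+1}-1$ clusters of size $2^{j+1}$, and sweep the remaining $\alpha_j+1$ small clusters together with the oversized cluster $X_{\beta_j}$ into a single leftover cluster of size $n-(\beta_{j+1}-1)2^{j+1}$.

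The device for the pairing is contraction. Collapsing each level-$j$ cluster $X_p$ to a single leaf of weight $|X_p|$ yields a binary tree $\sigma$ on $\beta_j$ weighted leaves, in which a cherry corresponds exactly to a pair of sibling level-$j$ clusters, hence to a genuine cluster of $\cT$. So it suffices to find $\beta_{j+1}-1$ disjoint cherries of $\sigma$, each joining two weight-$2^j$ leaves. Minimality enters here through Lemma~\ref{lemgamma}: I claim that so long as at least two unpaired weight-$2^j$ leaves remain, the lightest cherry of $\sigma$ must join two of them. Indeed, suppose instead a weight-$2^j$ leaf $a$ sat in a cherry with a strictly heavier leaf while two weight-$2^j$ leaves $c,d$ formed another cherry. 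Taking $a$, its partner, $c$, and $d$ as the four roles $X_1,\ldots,X_4$ of Lemma~\ref{lemgamma}, and the clusters threaded along the path between the two cherries as the chain $Y_1,\ldots,Y_m$, conditions (i)--(iii) are met, yet the sorting conclusion $x_1\le x_2\le x_3\le x_4$ forces the two lightest caps to share the end-cherry $A_0$ — impossible, since $a$ and at least one of $c,d$ are the two lightest but lie in different cherries. This contradiction (equivalently, an explicit cluster swap lowering $\Gamma(\cT)$) shows the lightest cherry is small--small. I then merge that cherry into a weight-$2^{j+1}$ cluster and repeat the same argument on $\cT$ with the merged cluster now treated as a single heavy leaf; after $\beta_{j+1}-1$ rounds the required full clusters have been produced, and the surviving leaves form the leftover cluster.

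The main obstacle is the rigor of this peeling. I must verify that at each round a legitimate chain decomposition satisfying (i)--(iii) actually exists in $\cT$ — which amounts to checking that the path in $\sigma$ between the two chosen cherries realises the intervening clusters as a caterpillar of clusters with the two cherries capping its ends — and that the successive forced pairings are globally consistent, i.e. produce $\beta_{j+1}-1$ genuinely disjoint size-$2^{j+1}$ clusters rather than conflicting ones. The oversized cluster $X_{\beta_j}$ and the residue governed by $\alpha_j$ need separate care, since these are exactly the leaves permitted to remain unpaired; the point is that by always pitting the two currently lightest clusters against each other in Lemma~\ref{lemgamma}, the heavy leftover is never forced into a size-$2^{j+1}$ cluster. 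The small-$\beta_j$ cases ($\beta_j\in\{2,3\}$, where $\sigma$ may have too few cherries to run the comparison) must also be dispatched by hand, after which matching the final count to $\beta_{j+1}$ and the leftover size is a routine check using $\beta_j=2\beta_{j+1}+\alpha_j$.
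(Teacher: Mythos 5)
Your skeleton matches the paper's: induct on $j$, contract the level-$j$ classes, and use Lemma~\ref{lemgamma} plus the recursion $\beta_j=2\beta_{j+1}+\alpha_j$ to pair size-$2^j$ classes into clusters of size $2^{j+1}$. But the one step where minimality actually enters --- your ``lightest cherry'' claim --- is broken, and it carries all the weight. Your contradiction is supposed to be: if a weight-$2^j$ leaf $a$ is cherried with a strictly heavier leaf $H$ while two weight-$2^j$ leaves $c,d$ form another cherry, then Lemma~\ref{lemgamma} is violated because ``$a$ and at least one of $c,d$ are the two lightest but lie in different cherries.'' This is false because of ties. The conclusion of Lemma~\ref{lemgamma} is only that the four caps can be relabelled, respecting the cherry-chain-cherry structure, so that $x_1\le x_2\le x_3\le x_4$; with cap sizes $2^j,|H|,2^j,2^j$ the relabelling $X_1=c$, $X_2=d$, $X_3=a$, $X_4=H$ achieves exactly this, since ``the two lightest'' is not unique when three caps are tied at $2^j$. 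So your configuration is perfectly consistent with minimality and yields no contradiction. (A second, independent defect: the negation of your claim does not even supply the cherry $\{c,d\}$ --- ``at least two unpaired weight-$2^j$ leaves remain'' does not mean two of them are siblings; they may all be scattered against heavier subtrees.)

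The correct application --- and what the paper's terse proof is implicitly doing --- places the two small classes at \emph{opposite} ends of the Lemma~\ref{lemgamma} configuration instead of in the same cherry. Suppose $X_p,X_q$ are size-$2^j$ classes neither of which lies in \emph{any} cluster of size $2^{j+1}$. On the path of the contracted tree between them, let $C$ be the subtree hanging off the neighbour of $X_p$ and $C'$ the one hanging off the neighbour of $X_q$. If $|C|=2^j$ then $C$ is a single class and $X_p\cup C$ would be a $2^{j+1}$-cluster containing $X_p$, contrary to assumption; hence $|C|>2^j$, and likewise $|C'|>2^j$. Now take $X_1=X_p$, $X_2=C$, $X_3=C'$, $X_4=X_q$, with the remaining path subtrees as the chain $Y_1,\ldots,Y_k$: the pairs have sizes $\{2^j,|C|\}$ and $\{|C'|,2^j\}$ with $|C|,|C'|>2^j$, and no admissible relabelling sorts these, a genuine contradiction. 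This establishes the key fact (at most one small class is unpairable), after which your parity bookkeeping goes through. Note also that you still owe the argument that the leftover small class(es) together with $X_{\beta_j}$ form a single \emph{cluster} of $\cT$ --- in the paper this is a second application of Lemma~\ref{lemgamma}, of the same shape as above --- whereas your proposal lists it among the ``obstacles'' and never discharges it.
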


\begin{proof}
For $j=0$, this holds trivially. We assume that for some $0\leq
j<k-1$, the partition $X_1,\ldots,X_{\beta_j}$ of $X$ satisfies the
conditions of the lemma.

Suppose that for $1\leq p<q<\beta_j$, there is no set $Y$ that
contains either $X_p$ or $X_q$ such that $Y|(X-Y)\in\Sigma(\cT)$ and
$|Y|=2^{j+1}$. Then we can apply Lemma~\ref{lemgamma} to find a tree $\cT'$ for
which $\Gamma(\cT')<\Gamma(\cT)$. Hence, for $m$ such that
$2m<\beta_j$, there are disjoint subsets $X_1',\ldots,X_m'$ of $X$
such that $X_p'|(X-X_p')\in\Sigma(\cT)$ and $|X_p'|=2^{j+1}$.

There are two cases to consider. Suppose firstly that $2m=\beta_j-2$. Then there is some $1\leq p<
\beta_j$ such that $X_p$ is not contained in some $Y$, where
$Y|(X-Y)\in\Sigma(\cT)$ and $|Y|=2^{j+1}$. We can then use Lemma~\ref{lemgamma}
again to show that if $X_{\beta_{j+1}}'=X_p\cup X_{\beta_j}$, then
$X_{\beta_{j+1}}'|(X-X_{\beta_{j+1}}')\in\Sigma(\cT)$. Since
$m+1=\beta_{j+1}$, we have the required partition.

On the other hand, if $2m=\beta_j-1$ then we can use Lemma~\ref{lemgamma} to
show that there is some $1\leq p\leq m$ such that, if
$X_{\beta_{j+1}}'=X_{\beta_j}\cup X_p'$, then
$X_{\beta_{j+1}}'|(X-X_{\beta_{j+1}}')\in\Sigma(\cT)$. Again, this
gives the required partition, completing the induction.
\end{proof}

The question now is what these trees look like. In some sense, the trees that minimise the size of $\Gamma(\cT)$ are maximally balanced, although we must carefully define what we mean by this. The only sizes of $n$ for which an unrooted binary tree can be truly balanced, or perfect, are $n=2^k$ or $n=3\cdot 2^k$, where the tree is vertex-transitive with respect to the leaves and we have either two-fold symmetry about an interior edge of the tree or three-fold symmetry about an interior vertex. For values of $n$ other than those which admit a perfect tree, we necessarily lose the global property of leaf-transitivity.

{   A tree $\cT\in\sT_n$, where $3\cdot 2^k\leq n< 3\cdot 2^{k+1}$ for some $k\geq 0$, is called} complete if and only if
\begin{itemize}
\item[(i)]there is a cluster $Y$ of $\cT$ with $|Y|=2^{k+1}$; and
\item[(ii)]for all clusters $Y$ with $2\leq |Y|\leq 2^{k+1}$, there is a bipartition $Y_1,Y_2$ of $Y$ such that both of $Y_1,Y_2$ are clusters of $\cT$, and such that $|Y_1|=2^j$ and $2^{j-1}\leq|Y_2|<2^{j+1}$ for some $j$.
\end{itemize}
Intuitively, for each cluster $Y$ with $|Y|$ being a power of 2, the pendant subtree $\cT|Y$  is perfectly balanced. For more details on complete trees and a generalization of completeness to trees with arbitrary vertex degrees, see~\cite{laszlo}.
The trees in Lemma~\ref{lemminimise} are precisely the complete trees in the space $\sT_n$, from which we obtain the next theorem. The proof is routine and omitted.

\begin{theorem}\label{thmminimise}
The tree $\cT\in\sT_n$ minimises the size of the {\tbr} {\nhd} over $\sT_n$ if and only if $\cT$ is complete.
\end{theorem}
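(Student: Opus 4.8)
The plan is to use Theorem~\ref{cortbr} to convert the problem into a statement about $\Gamma$, and then to match the characterisation of the $\Gamma$-minimisers supplied by Lemma~\ref{lemminimise} against the definition of a complete tree. Since Theorem~\ref{cortbr} gives $|N_{\tbr}(\cT)| = 4\Gamma(\cT) - (4n-2)(n-3)$ and the subtracted term depends only on $n$, a tree minimises $|N_{\tbr}(\cT)|$ over $\sT_n$ if and only if it minimises $\Gamma(\cT)$. Thus it suffices to prove that $\cT$ minimises $\Gamma$ over $\sT_n$ if and only if $\cT$ is complete.

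For the forward direction, suppose $\cT$ minimises $\Gamma$. Writing $n = \sum_{i=0}^k \alpha_i 2^i$ with $\alpha_k = 1$, Lemma~\ref{lemminimise} supplies, for each level $0 \le j \le k-1$, a partition of $X$ into $\beta_j = \lfloor n/2^j\rfloor$ clusters of $\cT$, all but one of which have size exactly $2^j$. The task is to assemble these level-wise partitions into the nested balanced hierarchy demanded by completeness. The key structural fact is that the clusters of any tree form a laminar family, so by comparing sizes each cluster of size $2^{j+1}$ meets every size-$2^j$ cluster from the previous level in either containment or disjointness; carrying this out level by level shows that every cluster $Y$ with $2 \le |Y| \le 2^{k'+1}$ (here $k'$ is the parameter from the definition of completeness, so that $3\cdot 2^{k'}\le n< 3\cdot 2^{k'+1}$) splits at its top edge into two clusters whose sizes satisfy $|Y_1| = 2^j$ and $2^{j-1} \le |Y_2| < 2^{j+1}$, which is condition~(ii), while the existence of a size-$2^{k'+1}$ cluster (condition~(i)) is read off from the partition at the level dictated by the inequality $3\cdot 2^{k'}\le n< 3\cdot 2^{k'+1}$. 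Hence $\cT$ is complete.

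For the reverse direction, I would first observe that the recursive balance conditions defining completeness determine the shape of a complete tree uniquely up to isomorphism: condition~(ii) forces every cluster whose size is a power of two to split perfectly in half, and a short induction on $|Y|$ pins down the split type of every remaining cluster, so there is a single complete tree shape in each $\sT_n$. Since $\sT_n$ is finite a $\Gamma$-minimiser exists, and by the forward direction it is complete; because all complete trees are isomorphic they share this minimal value of $\Gamma$, so every complete tree is also a minimiser. Combining the two directions gives the theorem.

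The step I expect to be the main obstacle is the laminar bookkeeping in the forward direction. Lemma~\ref{lemminimise} only guarantees a partition into clusters at each individual level and says nothing a priori about how the partitions at consecutive levels interlock, so promoting them to the nested balanced structure of condition~(ii) requires laminarity together with careful size counting. In particular one must handle the boundary sub-range $3\cdot 2^{k'} \le n < 4\cdot 2^{k'}$, in which the required cluster of size $2^{k'+1}$ does not appear as a guaranteed block of any single partition but only as a union of two lower-level clusters, so its existence must be deduced from the tree structure rather than quoted directly from the lemma.
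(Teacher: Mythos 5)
Your overall strategy -- reduce to $\Gamma$ via Theorem~\ref{cortbr}, then match the $\Gamma$-minimisers produced by Lemma~\ref{lemminimise} against the definition of completeness -- is exactly the route the paper takes (the paper simply declares the matching step ``routine'' and omits it). The reduction and the general shape of your forward direction are fine. The genuine gap is in your reverse direction, which rests on the claim that the complete tree shape in $\sT_n$ is unique up to isomorphism because ``a short induction on $|Y|$ pins down the split type of every remaining cluster''. This is false: condition~(ii) of completeness constrains only clusters $Y$ with $|Y|\leq 2^{k+1}$, so clusters of size larger than $2^{k+1}$ are left entirely unconstrained and no induction can determine their splits. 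Concretely, take $n=9$, so that $k=1$ and $2^{k+1}=4$ in the definition. Let $\cT_A$ be the tree formed by joining two perfectly balanced four-leaf subtrees by an edge and attaching the ninth leaf to the midpoint of that edge (non-trivial split sizes $2,2,2,2,4,4$), and let $\cT_B$ be the tree with non-trivial split sizes $2,2,2,2,3,4$ (a balanced four-leaf subtree on one side; a cherry joined to a cherry-plus-leaf on the other). Both trees satisfy (i) and (ii): every cluster of size at most $4$ in either tree is a cherry, a cherry plus a leaf, or a union of two cherries, and these all admit the required bipartitions. Yet the two trees are not isomorphic, and $\Gamma(\cT_A)=4\cdot 14+2\cdot 20=96$ while $\Gamma(\cT_B)=4\cdot 14+18+20=94$, which is the true minimum (as Lemma~\ref{lemother} confirms for $n=9$). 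So complete trees are not unique up to isomorphism, and your argument that every complete tree inherits the minimal value of $\Gamma$ collapses.

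In fact this counterexample shows more: read literally with the paper's stated conditions (i)--(ii), the theorem itself fails in the direction ``complete $\Rightarrow$ minimiser'', since $\cT_A$ is complete but not a minimiser; the written definition is too weak to capture the nested, level-by-level structure that Lemma~\ref{lemminimise} actually produces (note $\cT_A$ has no cluster of size $3$, so it admits no partition into clusters of sizes $2,2,2,3$ as the lemma requires at level $j=1$). The robust way to close the reverse direction -- and what the paper implicitly relies on -- is not shape-uniqueness under (i)--(ii) but the computation of Lemma~\ref{lemother}: every tree possessing the full nested partition structure of Lemma~\ref{lemminimise} has the same value of $\Gamma$; since $\sT_n$ is finite, a minimiser exists and has that structure, so every such tree is a minimiser. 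Any repaired proof must therefore work with that nested structure (equivalently, a strengthened definition of completeness; cf.\ the semi-regular property in~\cite{laszlo}) rather than with (i)--(ii) as written. A smaller point: your ``key structural fact'' that the clusters of a tree form a laminar family is also not quite right for unrooted trees, since a cluster and the complement of a disjoint cluster cross; laminarity holds only after fixing a leaf as root and orienting every split away from it.
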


Let us continue towards finding the size of the {\tbr} {\nhd} for complete trees. To this end, we introduce one additional notation. For each positive integer $m$, there exists a unique binary expansion $m=\sum_{i=0}^k \alpha'_i 2^i$, where
$\alpha'_i\in\{0,1\}$ for $0\leq i<k$ and $\alpha'_k=1$. Let $\tau(m)=1$ if $\alpha'_{k-1}=1$, and $\tau(m)=0$ otherwise. In particular, we have $\tau(2^k)=0$ for every $k$.  

\begin{lemma}\label{lemother}
Let $\cT\in\sT_n$ be a complete tree for some $n=\sum_{i=0}^k \alpha_i 2^i$, where
$\alpha_i\in\{0,1\}$ for $0\leq i<k$ and $\alpha_k=1$. Then:
\begin{align*}
\Gamma(\cT)&=
\sum_{j=1}^{k-1}\left[\left(\sum_{i=j}^k \alpha_i 2^i-2^j\right)\left(2n-\sum_{i=j}^k \alpha_i 2^i\right)+\alpha_{j-1}2^j(n-2^j)\right]\\
&\qquad+(\alpha_{k-1}-1)2^{k-1}(n-2^{k-1}).
\end{align*}
\end{lemma}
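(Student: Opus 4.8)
The plan is to exploit the fact that $\Gamma(\cT)=\sum|A|\,|B|=\sum_{A}|A|(n-|A|)$ over non-trivial splits depends only on the multiset of split sizes, and that, by Theorem~\ref{thmminimise} together with Theorem~\ref{cortbr} (which makes $|N_{\tbr}|$ a strictly increasing affine function of $\Gamma$), every complete tree attains the same minimal value of $\Gamma$ over $\sT_n$. Hence it suffices to compute $\Gamma$ for one conveniently chosen complete tree, namely the recursively balanced tree $\cT^{*}$ in which every cluster $Y$ with $|Y|=m\ge 2$ is split into a perfectly balanced block of size $2^{j(m)}$ and a complementary cluster of size $m-2^{j(m)}$, where $j(m)$ is the unique integer with $3\cdot 2^{j(m)-1}\le m<3\cdot 2^{j(m)}$. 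This is exactly the split forced by the definition of completeness on clusters of size at most $2^{K+1}$, and the first thing I would verify is that $\cT^{*}$ is itself complete: its top split is $(2^{K+1},\,n-2^{K+1})$, so the required cluster of size $2^{K+1}$ is present, and all smaller clusters split as prescribed.

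Next I would describe the clusters of $\cT^{*}$. Iterating the top split produces a spine of remainders $r_{0}=n>r_{1}>\cdots$, and off each remainder hangs a perfectly balanced block. Working down the spine, I would prove by induction the explicit description $r_{k-b}=R^{(b)}:=2^{b}+(n\bmod 2^{b})$ for $0\le b\le k$, together with the fact that the block split off at $R^{(b)}$ has size $2^{b-1}(1+\alpha_{b-1})$, a power of two equal to $2^{b-1}$ or $2^{b}$ according as $\alpha_{b-1}$ is $0$ or $1$. Every cluster of $\cT^{*}$ of size at least $2$ is then either a spine remainder $R^{(b)}$ or a sub-cluster of one of these balanced blocks, the latter necessarily of size a power of two. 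Writing $S$ for the multiset of all such cluster sizes, a direct count of edges gives $\Gamma(\cT^{*})=\sum_{c\in S,\,c\ge 2}c(n-c)-2^{j(n)}(n-2^{j(n)})$, where the subtracted term corrects for the unique (top) split whose two sides $2^{j(n)}$ and $n-2^{j(n)}$ both occur as clusters in the recursive description.

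I would then group this sum by the dyadic level of the cluster size, setting $\Sigma_{j}=\sum_{c\in S,\,2^{j}\le c<2^{j+1}}c(n-c)$, so that $\Gamma(\cT^{*})=\sum_{j\ge 1}\Sigma_{j}-2^{j(n)}(n-2^{j(n)})$. At level $j$ the contributing clusters are exactly the size-$2^{j}$ sub-clusters of the balanced blocks, of which there are $M_{j}=\beta_{j}-1+\alpha_{j-1}$ (obtained by summing $2^{q-j}$ over blocks of size $2^{q}\ge 2^{j}$ and simplifying via $\sum_{i\ge j}\alpha_{i}2^{i}=s_{j}$), together with the single spine remainder $R^{(j)}$, which is the unique $R^{(b)}$ lying in $[2^{j},2^{j+1})$. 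Using $(\beta_{j}-1)2^{j}=s_{j}-2^{j}$ this gives $\Sigma_{j}=(s_{j}-2^{j})(n-2^{j})+\alpha_{j-1}2^{j}(n-2^{j})+R^{(j)}(n-R^{(j)})$. The computational heart is the clean identity $R^{(j)}=2^{j}+(n\bmod 2^{j})=n-(s_{j}-2^{j})$, which yields $R^{(j)}(n-R^{(j)})=(s_{j}-2^{j})(n-s_{j}+2^{j})$; adding this to $(s_{j}-2^{j})(n-2^{j})$ collapses the level-$j$ contribution to precisely $(s_{j}-2^{j})(2n-s_{j})+\alpha_{j-1}2^{j}(n-2^{j})$, the $j$-th summand of the claimed formula.

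Finally I would isolate the top level $j=k$, where $\beta_{k}=1$ forces $M_{k}=\alpha_{k-1}$ and $R^{(k)}=n$ contributes nothing, so $\Sigma_{k}=\alpha_{k-1}2^{k}(n-2^{k})$; combining $\Sigma_{k}-2^{j(n)}(n-2^{j(n)})$ with the observation that $2^{j(n)}=2^{k-1+\alpha_{k-1}}$ reproduces the trailing term $(\alpha_{k-1}-1)2^{k-1}(n-2^{k-1})$ in both cases $\alpha_{k-1}=0$ and $\alpha_{k-1}=1$. The main obstacle is the combinatorial bookkeeping of the second paragraph: the balanced blocks of $\cT^{*}$ are \emph{not} the blocks of the binary expansion of $n$ (for instance, when $n=13$ they are $8,2,2,1$ rather than $8,4,1$), so one must carry out carefully the greedy analysis that shifts the leading bit down by one whenever the next bit vanishes, in order to establish $R^{(b)}=2^{b}+(n\bmod 2^{b})$ and the block-size formula. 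Once these structural facts are secured, the remaining manipulations are exactly the elementary identity above, so the risk lies entirely in boundary cases and in the single root-split correction.
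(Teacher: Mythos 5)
Your proposal is correct, and all of its structural claims check out (I verified the spine recursion $R^{(b)}=2^b+(n\bmod 2^b)$ with block sizes $2^{b-1}(1+\alpha_{b-1})$, the count $M_j=\beta_j-1+\alpha_{j-1}$, and the top-split correction, e.g.\ on $n=13$ and $n=2^k$, where your formula reproduces Corollary~\ref{corperfect}). However, you reach the formula by a genuinely different setup than the paper. The paper computes $\Gamma(\cT)$ for an arbitrary complete tree by invoking the partition structure from the \emph{proof} of Lemma~\ref{lemminimise}: at each level $j$ it counts $\beta_j-1+\tau(|X_{\beta_j}|)$ clusters of size $2^j$ plus the remainder cluster $X_{\beta_j}$ of size $n-2^j(\beta_j-1)$, and when $\alpha_{k-1}=0$ it subtracts $2^{k-1}(n-2^{k-1})$ because the level-$(k-1)$ partition is a bipartition whose two parts give the same split. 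You instead reduce to a single tree: since $|N_{\tbr}(\cdot)|$ is a strictly increasing affine function of $\Gamma(\cdot)$ by Theorem~\ref{cortbr}, Theorem~\ref{thmminimise} implies all complete trees attain the common minimum of $\Gamma$ over $\sT_n$, so it suffices to evaluate $\Gamma$ on your explicitly constructed $\cT^*$, whose clusters you describe from scratch. Once set up, the two computations coincide exactly: your $R^{(j)}=n-(s_j-2^j)$ is the paper's $|X_{\beta_j}|$, your $M_j$ is the paper's $\beta_j-1+\tau(|X_{\beta_j}|)$ (indeed $\tau(R^{(j)})=\alpha_{j-1}$), and your uniform correction $-2^{j(n)}\bigl(n-2^{j(n)}\bigr)$ is equivalent to the paper's case split on $\alpha_{k-1}$. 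What your route buys is independence from the internal induction of Lemma~\ref{lemminimise} together with a concrete, checkable description of one minimizer; the cost is explicit reliance on Theorem~\ref{thmminimise}, whose proof the paper omits as routine. This is not circular (Lemma~\ref{lemother} is never used to prove Theorem~\ref{thmminimise}), and it is no heavier a dependence than the paper's own, since applying Lemma~\ref{lemminimise} (stated for minimizers) to a complete tree uses the same identification of complete trees with minimizers.
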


\begin{proof}
We use the proof of Lemma~\ref{lemminimise} to obtain this result. For each of the partitions $X_1,\ldots,X_{\beta_j}$, we take the sum of $|X_p|\cdot(n-|X_p|)$. 
Note that $X_{\beta_j}$ contains a cluster of size $2^{j}$ if and only if $\tau(|X_{\beta_j}|)=1$.

Consider a tree on $n$ leaves where $\alpha_{k-1}=1$ following the notation of Lemma~\ref{lemminimise}. This gives
\begin{align*}
\Gamma(\cT)&=\sum_{j=1}^{k-1}\left[\sum_{p=1}^{\beta_j}|X_p|\cdot(n-|X_p|)\right]+\tau(|X_{\beta_j}|)2^j(n-2^j)\\
&=\sum_{j=1}^{k-1}\left[2^j\Big(\beta_j-1+\tau(|X_{\beta_j}|)\Big)(n-2^j)+|X_{\beta_j}|\cdot(n-|X_{\beta_j}|)\right].
\end{align*}
We also have from Lemma~\ref{lemminimise} that
\begin{align*}
|X_{\beta_j}|&=n-2^j(\beta_j-1),
\end{align*}
and hence $\tau(|X_{\beta_j}|)=1$ if and only if $\alpha_{j-1}=1$.
Incorporating this into the above expression, we find
\begin{align*}
\Gamma(\cT)&=\sum_{j=1}^{k-1} \left[2^j(\beta_j-1)(2n-2^j\beta_j)+\alpha_{j-1}2^j(n-2^j)\right]\\
&=\sum_{j=1}^{k-1}\left[\left(\sum_{i=j}^k \alpha_i 2^i-2^j\right)\left(2n-\sum_{i=j}^k \alpha_i 2^i\right)+\alpha_{j-1}2^j(n-2^j)\right].
\end{align*}

In the case that $\alpha_{k-1}=0$, the partition $X_1,\ldots,X_{\beta_{k-1}}$ is a bipartition of the leaf set of $\cT$, and so we need only take the product $|X_1|\cdot(n-|X_1|)$ once in the sum above. In other words, we need to subtract $2^{k-1}(n-2^{k-1})$ from the formula.
\end{proof}

We conclude this section with two corollaries, the first of which gives an exact value for the size of the {\tbr} {\nhd} for perfect trees, and the second an asymptotic lower bound on the size of this neighbourhood for complete trees. Both proofs follow from Lemma~\ref{lemother} and Theorem~\ref{cortbr}.

\begin{corollary}\label{corperfect}
Let $\cT\in\sT_n$ be a perfect tree. Then
\begin{align*}
|N_{\tbr}(\cT)|&=n^2\left(4k-\frac{32}{3}\right)+22n-6
\end{align*}
if $n=3\cdot 2^{k-1}$ for some $k$, and
\begin{align*}
|N_{\tbr}(\cT)|&=n^2(4k-13)+22n-6
\end{align*}
if $n=2^k$ for some $k$.
\end{corollary}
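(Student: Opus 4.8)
The plan is to treat a perfect tree as a special case of a complete tree, so that Lemma~\ref{lemother} applies directly, and then substitute the resulting value of $\Gamma(\cT)$ into Theorem~\ref{cortbr}. First I would record the two relevant binary expansions: when $n=2^k$ the only nonzero coefficient is $\alpha_k=1$, while when $n=3\cdot 2^{k-1}=2^k+2^{k-1}$ we have $\alpha_k=\alpha_{k-1}=1$ and every other $\alpha_i=0$. The crucial observation in both cases is that $n$ carries no set bits below index $k-1$, so the partial sums in Lemma~\ref{lemother} collapse: for every $j\in\{1,\dots,k-1\}$ one has $\sum_{i=j}^k\alpha_i2^i=n$, and moreover $\alpha_{j-1}=0$ throughout this range (the lowest set bit sits at index $k-1$, which is never reached as $j-1$ when $j\le k-1$). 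Consequently the entire sum in Lemma~\ref{lemother} reduces to $\sum_{j=1}^{k-1}(n-2^j)(2n-n)=n\sum_{j=1}^{k-1}(n-2^j)$.

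Next I would evaluate this elementary sum using $\sum_{j=1}^{k-1}2^j=2^k-2$, obtaining $n\bigl[(k-1)n-(2^k-2)\bigr]$, and then distinguish the two cases through the correction term $(\alpha_{k-1}-1)2^{k-1}(n-2^{k-1})$ appearing in Lemma~\ref{lemother}. This term is precisely where the case split lives: for $n=3\cdot 2^{k-1}$ we have $\alpha_{k-1}=1$, so the term vanishes, whereas for $n=2^k$ we have $\alpha_{k-1}=0$ and $2^{k-1}=n/2$, so the term contributes $-n^2/4$. Substituting $2^k=2n/3$ in the former case and $2^k=n$ in the latter then yields $\Gamma(\cT)=(k-\tfrac53)n^2+2n$ and $\Gamma(\cT)=(k-\tfrac94)n^2+2n$ respectively.

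Finally I would invoke Theorem~\ref{cortbr}, writing $|N_{\tbr}(\cT)|=4\Gamma(\cT)-(4n-2)(n-3)$ and expanding $(4n-2)(n-3)=4n^2-14n+6$. The two substitutions then give $n^2\bigl(4k-\tfrac{32}{3}\bigr)+22n-6$ and $n^2(4k-13)+22n-6$, matching the claimed expressions. No step here is genuinely difficult; the only place demanding care is reading off the binary data correctly and recognising that the $(\alpha_{k-1}-1)$ correction term is exactly what separates the two perfect shapes, the remainder being routine algebra that I would carry out without further comment.
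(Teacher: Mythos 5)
Your proposal is correct and follows essentially the same route as the paper: both specialise Lemma~\ref{lemother} to the binary expansions of $n=3\cdot 2^{k-1}$ and $n=2^k$ (where the main sum collapses to $n\sum_{j=1}^{k-1}(n-2^j)$, with the $(\alpha_{k-1}-1)2^{k-1}(n-2^{k-1})$ term distinguishing the two cases), and then substitute the resulting $\Gamma(\cT)=n^2(k-\tfrac53)+2n$ or $n^2(k-\tfrac94)+2n$ into Theorem~\ref{cortbr}. Your write-up is merely more explicit than the paper's about how the terms of Lemma~\ref{lemother} simplify; the computations agree throughout.
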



\begin{proof}
In the first case, where $n=3\cdot 2^{k-1}$, we have
\begin{align*}
\Gamma(\cT)&=\sum_{j=1}^{k-1} n(n-2^j)\\
&=n^2(k-1)-n(2^k-2)\\
&=n^2\left(k-\frac{5}{3}\right)+2n,
\end{align*}
and the result follows by applying Theorem~\ref{cortbr}. On the other hand, if $n=2^{k}$ then
\begin{align*}
\Gamma(\cT)&=\sum_{j=1}^{k-2}n(n-2^j)+\frac{n^2}{4}\\
&=n^2\left(k-\frac{7}{4}\right)-n(2^{k-1}-2)\\
&=n^2\left(k-\frac{9}{4}\right)+2n,
\end{align*}
and again applying Theorem~\ref{cortbr} gives the required result.
\end{proof}

\begin{corollary}\label{corminimise}
Let $\cT\in\sT_n$ be a complete tree. Then
\begin{align*}
|N_{\tbr}(\cT)|&=4n^2\lfloor\log_2 n\rfloor+O(n^2).
\end{align*}
\end{corollary}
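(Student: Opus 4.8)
The plan is to reduce the whole statement to an asymptotic estimate of $\Gamma(\cT)$ and then invoke Theorem~\ref{cortbr}. Since
\[
|N_{\tbr}(\cT)| = 4\Gamma(\cT) - (4n-2)(n-3),
\]
and the subtracted term is plainly $O(n^2)$, it suffices to show that $\Gamma(\cT) = n^2\lfloor \log_2 n\rfloor + O(n^2)$. Writing $n = \sum_{i=0}^k \alpha_i 2^i$ as in Lemma~\ref{lemother}, we have $2^k \le n < 2^{k+1}$ and hence $k = \lfloor \log_2 n\rfloor$; the target is therefore $\Gamma(\cT) = k\,n^2 + O(n^2)$.

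First I would feed the exact formula of Lemma~\ref{lemother} into this estimate. Setting $S_j = \sum_{i=j}^k \alpha_i 2^i$, the key observation is that $S_j$ is a good approximation of $n$: since $n - S_j = \sum_{i=0}^{j-1}\alpha_i 2^i$, we have $0 \le n - S_j < 2^j$. Writing $S_j = n - r_j$ with $0 \le r_j < 2^j$, a direct expansion gives
\[
(S_j - 2^j)(2n - S_j) = n^2 - 2^j n - r_j^2 - 2^j r_j,
\]
which isolates the dominant contribution $n^2$ from each index $j$, with the remaining three terms serving as error.

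Next I would sum over $j$ from $1$ to $k-1$ and bound the error. The leading contribution is $\sum_{j=1}^{k-1} n^2 = (k-1)n^2$. For the error terms, the bound $2^k \le n$ is exactly what turns the geometric sums $\sum_{j=1}^{k-1} 2^j n \le n(2^k - 2)$ and $\sum_{j=1}^{k-1}(r_j^2 + 2^j r_j) < \sum_{j=1}^{k-1} 2^{2j+1} = O(2^{2k})$ into $O(n^2)$ quantities. The two leftover contributions in Lemma~\ref{lemother}, namely $\sum_{j=1}^{k-1}\alpha_{j-1} 2^j(n - 2^j)$ and $(\alpha_{k-1} - 1)2^{k-1}(n - 2^{k-1})$, are dominated by $\sum_{j=1}^{k} 2^j n = O(n^2)$ and by $2^{k-1}n = O(n^2)$ respectively. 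Collecting these yields $\Gamma(\cT) = (k-1)n^2 + O(n^2) = k\,n^2 + O(n^2)$, and substituting into Theorem~\ref{cortbr} gives the claim.

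There is little genuine difficulty here: every error term is a geometric sum controlled by the single inequality $2^k \le n$. The one point demanding care is the bookkeeping that $S_j$ differs from $n$ by less than $2^j$ rather than by some larger quantity, since it is precisely this that keeps $\sum_j r_j^2$ of order $2^{2k} = O(n^2)$ rather than of order $k\cdot n^2$, which would spoil the leading coefficient. Beyond that, the argument is a routine collection of $O(n^2)$ estimates assembled around the clean main term $(k-1)n^2$.
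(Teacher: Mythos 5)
Your proposal is correct and follows essentially the same route as the paper: both reduce the claim via Theorem~\ref{cortbr} to showing $\Gamma(\cT)=n^2\lfloor\log_2 n\rfloor+O(n^2)$, substitute $S_j=n-r_j$ with $0\le r_j<2^j$ into the formula of Lemma~\ref{lemother}, extract the main term $(k-1)n^2$, and dispose of every remaining term as a geometric sum of order $O(2^{2k})=O(n^2)$. The only cosmetic difference is that the paper treats the cases $\alpha_{k-1}=1$ and $\alpha_{k-1}=0$ separately, whereas you absorb the correction term $(\alpha_{k-1}-1)2^{k-1}(n-2^{k-1})$ directly into the $O(n^2)$ error, which is a harmless simplification.
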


\begin{proof}
The proof is similar in nature to that for 
the previous corollary. 
If $3\cdot 2^{k-1}\leq n<2^{k+1}$ for some $k\geq 1$, then we have
\begin{align*}
\Gamma(\cT)&=\sum_{j=1}^{k-1}\left[\left(n-\sum_{i=0}^{j-1}\alpha_i 2^i-2^j\right)\left(n+\sum_{i=0}^{j-1}\alpha_i 2^i\right)+\alpha_{j-1}2^j(n-2^j)\right]
\\
&=n^2(k-1)-n(2^k-2)+\sum_{j=1}^{k-1} \alpha_{j-1}2^j(n-2^j)-\sum_{j=1}^{k-1}\left(\sum_{i=0}^{j-1}\alpha_i 2^i\right)^2.
\end{align*}
However, we can obtain a bound for the final term of this expression by assuming that $\alpha_i=1$ for all $i\in\{0,\ldots,k-2\}$, giving
\begin{align*}
\sum_{j=1}^{k-1}\left(\sum_{i=0}^{j-1}\alpha_i 2^i\right)^2&<\sum_{j=1}^{k-1}2^{2j}\\
&=\frac{2}{3}\left(2^{2k-1}-1\right)\\
&=O(n^2).
\end{align*}
Similarly, we have $\sum_{j=1}^{k-1} \alpha_{j-1}2^j(n-2^j)=O(n^2)$, as required.

The other case, where $2^k\leq n<3\cdot 2^{k-1}$, follows in a similar manner, and we complete the proof by applying Theorem~\ref{cortbr}.
\end{proof}

\end{document}